\newcommand{\nc}{\newcommand}
\nc{\dmo}{\DeclareMathOperator}
\nc{\nt}{\newtheorem}
\newtheorem{theorem}{Theorem}[section]
\newtheorem{mainthm}{Theorem}
\newtheorem{lemma}[theorem]{Lemma}
\newtheorem{proposition}[theorem]{Proposition}
\newtheorem{corollary}[theorem]{Corollary}
\theoremstyle{definition}
\newtheorem{question}[theorem]{Question} 
\newtheorem{problem}[theorem]{Problem} 
\newtheorem{claim}[theorem]{Claim}
\newtheorem{remark}[theorem]{Remark}
\theoremstyle{remark}
\nc{\cut}{\!\ssearrow\!}
\dmo{\Diff}{Diff}
\dmo{\Mod}{Mod}
\dmo{\SMod}{SMod}
\dmo{\I}{\mathcal{I}}
\dmo{\SO}{SO}
\dmo{\Orth}{O}
\dmo{\Sp}{Sp}
\dmo{\SL}{SL}
\dmo{\GL}{GL}
\dmo{\im}{im}
\dmo{\Emb}{Emb}
\dmo{\PSp}{PSp}
\dmo{\PSL}{PSL}
\dmo{\PMod}{PMod}
\dmo{\Homeo}{Homeo}
\dmo{\Twist}{Twist}
\dmo{\Aut}{Aut}
\dmo{\Nil}{Nil}
\dmo{\Sol}{Sol}
\dmo{\Isom}{Isom}
\dmo{\Out}{Out}
\dmo{\OUT}{\mathcal{OUT}}
\dmo{\AUT}{\mathcal{AUT}}
\dmo{\Inn}{Inn}
\dmo{\ab}{ab}
\dmo{\id}{id}
\dmo{\orb}{orb}
\dmo{\Hom}{Hom}
\dmo{\Fix}{Fix}
\dmo{\Ind}{Ind}
\dmo{\Stab}{Stab}
\dmo{\pt}{pt}
\nc{\car}{\curvearrowright}
\nc{\what}{\widehat}
\nc{\Si}{\Sigma}
\nc{\D}{\mathbb D}
\nc{\Z}{\mathbb Z}
\nc{\N}{\mathcal N}
\nc{\R}{\mathbb R}
\nc{\F}{\mathcal F}
\nc{\C}{\mathbb{C}}
\nc{\onto}{\twoheadrightarrow}
\nc{\cc}{\mathcal C}
\nc{\B}{\mathcal{B}}
\nc{\ga}{\gamma}
\nc{\de}{\delta}
\nc{\ep}{\epsilon}
\nc{\ra}{\rightarrow}
\nc{\ti}{\times}
\nc{\wtil}{\widetilde}
\nc{\pa}{\partial}
\nc{\pair}[1]{\langle #1 \rangle}
\nc{\flm}{\lambda_{2}}
\nc{\normalclosure}[1]{\ensuremath{\left \langle \left \langle #1 \right \rangle \right \rangle}}
\nc{\margin}[1]{\marginpar{\scriptsize #1}}
\nc{\p}[1]{\bigskip\noindent\textbf{#1.}}
\nc{\lei}[1]{{\color{red} \sf  L: [#1]}}
\nc{\bena}[1]{{\color{blue} \sf  B: [#1]}}
\title{Surface mapping class group actions on 3-manifolds}
\author{Alina al Beaini} 
\author{Lei Chen}
\author{Bena Tshishiku}
\address{Alina Al Beaini \\ Department of Mathematics\\ Brown University \\ 151 Thayer St.  \\ Providence, RI, 02912, USA\\  alina$\_$al$\_$beaini@brown.edu.   }
\address{Lei Chen \\ Department of Mathematics\\ University of Maryland \\ 4176 Campus Drive\\ College Park, MD 20742, USA \\  chenlei@umd.edu}
\address{Bena Tshishiku \\ Department of Mathematics\\ Brown University \\ 151 Thayer St.  \\ Providence, RI, 02912, USA\\  bena$\_$tshishiku@brown.edu.   }
\date{\today}
\begin{document}

\maketitle

\vspace*{-4ex}

\begin{abstract}
For each circle bundle $S^1\to X\to\Sigma_g$ over a surface with genus $g\ge2$, there is a natural surjection $\pi:\Homeo^+(X)\to\Mod(\Sigma_g)$. When $X$ is the unit tangent bundle $U\Sigma_g$, it is well-known that $\pi$ splits. On the other hand $\pi$ does not split when the Euler number $e(X)$ is not divisible by the Euler characteristic $\chi(\Sigma_g)$ by \cite{mod-circle-bundle}. In this paper we show that this homomorphism does not split in many cases where $\chi(\Si_g)$ divides $e(X)$. 
\end{abstract}

\section{Introduction}

Let $\Si_g$ be a closed oriented surface of genus $g\ge2$, and let $X_{g,e}$ denote the oriented $S^1$-bundle over $\Si_g$ with Euler number $e$. Let $\Homeo^+(X_{g,e})$ be the group of orientation-preserving homeomorphisms of $X_{g,e}$ that act trivially on the center of $\pi_1(X_{g,e})$, and let $\Mod(X_{g,e}):=\pi_0\big(\Homeo^+(X_{g,e})\big)$ denote the mapping class group. 

The (generalized) Nielsen realization problem for $X_{g,e}$ asks whether the surjective homomorphism 
\[\Homeo^+(X_{g,e})\to \Mod(X_{g,e})\]
splits over subgroups of $\Mod(X_{g,e})$. In this paper we study a closely related problem. For each $g,e$ there is a surjection $\Mod(X_{g,e})\to\Mod(\Si_g)$. Consider the composition
\[\pi_{g,e}: \Homeo^+(X_{g,e})\onto \Mod(X_{g,e})\onto\Mod(\Si_g).\]

\begin{problem}\label{prob:main}
Does $\pi_{g,e}:\Homeo^+(X_{g,e})\onto\Mod(\Si_g)$ spilt? 
\end{problem}

If $e=\pm(2g-2)$, then $X_{g,e}$ is the unit (co)tangent bundle, and $\pi_{g,e}$ does split; see \cite[\S1]{Souto}. On the other hand, if $e$ is not divisible by $2g-2$, then the surjection $\Mod(X_{g,e})\to\Mod(\Si_g)$ does not split by work of the second two authors \cite{mod-circle-bundle}, so $\pi_{g,e}$ also does not split in these cases. Given this, it remains to study the case when $e$ is divisible by $2g-2$ and $e\neq\pm(2g-2)$. In these cases $\Mod(X_{g,e})\to\Mod(\Si_g)$ does split \cite{mod-circle-bundle}, but we prove $\pi_{g,e}$ does not split in many cases. 




\begin{mainthm}\label{thm:main} Fix a surface $\Sigma_g$ of genus $g$ and $e\in\mathbb Z$. 
Assume that $g=4k-1$ where $k\ge3$ and $k$ is not a power of $2$, and assume that $e$ is divisible by $(2g-2)2p$ where $p$ is an odd prime dividing $k$. Then the natural surjective homomorphism $\pi_{g,e}:\Homeo^+(X_{g,e})\to\Mod(\Si_{g})$ does not split.
\end{mainthm}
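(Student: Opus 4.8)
The plan is to show non-splitting by finding an obstruction to the existence of a section $s:\Mod(\Si_g)\to\Homeo^+(X_{g,e})$. The standard strategy for Nielsen-realization-type problems is to restrict a hypothetical section to a carefully chosen finite subgroup $G\le\Mod(\Si_g)$ and derive a contradiction from the geometry of how $G$ could act on the $3$-manifold $X_{g,e}$. So first I would fix a finite subgroup $G\le\Mod(\Si_g)$ realized by an honest group of diffeomorphisms of $\Si_g$ (Nielsen realization for finite groups, due to Kerckhoff, holds), chosen so that the number-theoretic hypotheses—$g=4k-1$ with $k$ not a power of $2$ and an odd prime $p\mid k$—become relevant. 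The condition $g=4k-1$ and the appearance of $2p\mid e/(2g-2)$ strongly suggests that $G$ should be a cyclic group whose order involves $p$, acting on $\Si_g$ with a prescribed branching/fixed-point data governed by the Riemann–Hurwitz formula, so that $4k-1$ arises as the genus of a specific branched cover.

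**Second**, I would analyze how a section $s$ would force $G$ to act on $X_{g,e}$ lifting its action on $\Si_g$. Because $\pi_{g,e}$ factors through $\Mod(X_{g,e})$, a section lands in $\Homeo^+(X_{g,e})$ and gives a genuine (topological) action of $G$ on the total space covering the action on the base, compatible with the circle-bundle structure up to isotopy. The key is to extract a numerical invariant of this lifted action—most naturally the Euler number $e$ must be compatible with the equivariant structure. Concretely, I expect to use the fact that an action of $G$ on $X_{g,e}$ lifting a $G$-action on $\Si_g$ with fixed points (or on the quotient orbifold) constrains $e$ via a congruence: the Euler number of the bundle, when pulled back or restricted over the fixed-point/branch data, must satisfy a divisibility condition coming from the local rotation numbers of $G$ on the circle fibers. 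This is where the hypothesis that $e$ is divisible by $(2g-2)2p$ but that $g=4k-1$ with the specific arithmetic of $k$ produces an incompatibility.

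**The main obstacle**—and the technical heart—will be converting the homotopy-theoretic/topological fact that a section exists into a rigid geometric action with controllable local invariants. A section only gives homeomorphisms up to isotopy a priori, so I must promote this to an actual finite group action (again invoking averaging/Nielsen realization on the total space, or using the orbifold structure of $X_{g,e}/G$) and then compute the Euler class $G$-equivariantly. I would compute the equivariant Euler number by restricting to the fixed circles (Seifert-fibered pieces) of the $G$-action, reading off rotation numbers, and summing local contributions à la a Lefschetz/Seifert-invariant count; the prime $p$ controls these rotation numbers through $\Z/p$-representation theory on the fibers. The contradiction should take the form: the global Euler number $e$ computed this way must be $\equiv c \pmod{p}$ for some $c\neq 0$ determined by $g=4k-1$ and the branching, contradicting $(2g-2)2p\mid e$. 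Verifying that this congruence is genuinely obstructed—i.e., that no choice of rotation numbers reconciles the two sides, using that $k$ is not a power of $2$ to guarantee an odd prime $p\mid k$ with the right parity—is the step I expect to require the most care.
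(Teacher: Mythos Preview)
Your approach has a fundamental gap: restricting a hypothetical section to a finite subgroup $G$ and seeking an Euler-number congruence cannot produce an obstruction here. Under the hypothesis $(2g-2)\mid e$, the map $\Mod(X_{g,e})\to\Mod(\Si_g)$ already \emph{splits} (this is the result of \cite{mod-circle-bundle} recalled in the introduction), and in particular every finite $G<\Mod(\Si_g)$ lifts to an honest action on $X_{g,e}$; indeed one can realize $G$ isometrically on $\Si_g$ and lift to the circle bundle. So no contradiction is visible from a single finite subgroup, and the divisibility hypothesis $(2g-2)2p\mid e$ is not something to be contradicted---it is used \emph{positively} in the proof to force a fixed-circle configuration. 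Your proposed congruence $e\equiv c\not\equiv 0\pmod p$ therefore cannot be derived.

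The paper's argument is of an entirely different nature, modeled on Chen--Salter. One lifts the hypothetical section to an action of the \emph{based} mapping class group $\Mod(\Si_g,*)$ on the cover $\widehat X\cong\R^2\times S^1$, under which the point-pushing subgroup $\pi_1(\Si_g)$ acts by deck transformations. One then exhibits an order-$2p$ element $\alpha\in\Mod(\Si_g,*)$ (coming from a dihedral $D_{4k}$ symmetry of $\Si_g$; this is where $g=4k-1$ enters) whose lifted action has fixed set a single embedded circle $c\subset\widehat X$, with the fixed sets of $\alpha$, $\alpha^2$, $\alpha^p$ all equal to $c$. Establishing this uses Meeks--Scott geometrization, Pardon's approximation of topological finite group actions by smooth ones, Smith theory, and---crucially---the divisibility $(2g-2)2p\mid e$ to ensure the rotation on the fiber is trivial. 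The heart of the proof is then a combinatorial centralizer argument: one shows $\Mod(\Si_g,*)=\langle C(\alpha^2),C(\alpha^p)\rangle$ via explicit curve manipulations (symmetry breaking and subsurface stabilization). Since each centralizer preserves $c$, all of $\Mod(\Si_g,*)$ preserves the compact set $c$, contradicting the proper discontinuity of the deck-group action of $\pi_1(\Si_g)$. Your proposal misses both the passage to $\Mod(\Si_g,*)$ acting on the cover and the centralizer-generation step, which is where the real work lies.
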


For example, if $e=0$, we find that $\pi_{g,e}:\Homeo^+(\Si_g\times S^1)\to\Mod(\Si_g)$ does not split when $g=11, 19, 23, 27, 35, 39, 43, 47, \ldots$. 

Theorem \ref{thm:main} solves the Nielsen realization problem for $\Mod(\Si_g)$ subgroups of $\Mod(X_{g,e})$ in the cases of the theorem. Specifically, if $e$ is divisible by $2g-2$, then $\Mod(X_{g,e})\cong H^1(\Si_g;\Z)\rtimes\Mod(\Si_g)$ \cite{mod-circle-bundle}, and every $\Mod(\Si_g)$ subgroup of $\Mod(X_{g,e})$ is the image of a splitting of $\Mod(X_{g,e})\to\Mod(\Si_g)$. By Theorem \ref{thm:main}, $\Homeo^+(X_{g,e})\to\Mod(X_{g,e})$ does not split over any of these $\Mod(\Si_g)$ subgroups. 

Theorem \ref{thm:main} has the following topological consequence. When $2g-2$ divides $e$, there is a ``tautological" $X_{g,e}$-bundle $E_{g,e}^{\text{taut}} \to B\Homeo(\Si_g)$ whose monodromy 
\[\Mod(\Si_g)\cong\pi_1\big(B\Homeo(\Si_g)\big)\to\Mod(X_{g,e})\] splits the surjection $\Mod(X_{g,e})\to\Mod(\Si_g)$ (c.f.\ \cite[\S1]{mod-circle-bundle}). One can ask whether or not the bundle $E_{g,e}^{\text{taut}}\to B\Homeo(\Si_g)$ is flat. Recall that an $X$-bundle $E\to B$  is \emph{flat} if there is a homomorphism $\rho:\pi_1(B)\to\Homeo(X)$ and an $X$-bundle isomorphism $E\cong X\rtimes_\rho B$. Such bundles are characterized by the existence of a horizontal foliation on $E$, or, equivalently, by the property that their monodromy $\pi_1(B)\to\Mod(X)$ lifts to $\Homeo(X)$. When $e=2g-2$, the bundle $E_{g,e}^{\text{taut}}\to B\Homeo(\Si_g)$ is flat because of the splitting of $\pi_{g,e}$ in this case. When $\pi_{g,e}$ does not split, we deduce that $E_{g,e}^{\text{taut}}\to B\Homeo(S_g)$ is not flat.

\begin{corollary}
Fix $g,e$ as in the statement of Theorem \ref{thm:main}. Then the tautological $X_{g,e}$-bundle $E_{g,e}^{\text{taut}}\to B\Homeo(S_g)$ is not flat. 
\end{corollary}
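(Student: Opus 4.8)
The plan is to reduce the statement directly to Theorem~\ref{thm:main} via the characterization of flatness recalled just above. First I would fix notation for the relevant homomorphisms: write $q:\Homeo^+(X_{g,e})\onto\Mod(X_{g,e})$ for the natural surjection and $r:\Mod(X_{g,e})\onto\Mod(\Si_g)$ for the forgetful map, so that by definition $\pi_{g,e}=r\circ q$. Since $g,e$ are as in Theorem~\ref{thm:main}, in particular $2g-2$ divides $e$, so the monodromy of the tautological bundle is a homomorphism $s:\Mod(\Si_g)\to\Mod(X_{g,e})$ with $r\circ s=\id_{\Mod(\Si_g)}$; that is, $s$ is a section of $r$.

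Next I would invoke the stated characterization: the bundle $E_{g,e}^{\text{taut}}\to B\Homeo(\Si_g)$ is flat if and only if its monodromy $s$ lifts along $q$ to a homomorphism $\tilde\rho:\Mod(\Si_g)\to\Homeo^+(X_{g,e})$ with $q\circ\tilde\rho=s$. I would then argue by contradiction: assuming such a lift $\tilde\rho$ exists, composing gives
\[
\pi_{g,e}\circ\tilde\rho=r\circ q\circ\tilde\rho=r\circ s=\id_{\Mod(\Si_g)},
\]
so $\tilde\rho$ would split $\pi_{g,e}$. This contradicts Theorem~\ref{thm:main}, which asserts that for $g,e$ in the given range $\pi_{g,e}$ does not split. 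Hence no such lift exists and the bundle is not flat.

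All of the mathematical content lives in Theorem~\ref{thm:main}; the corollary is a formal consequence, so I expect no genuine obstacle here. The only points requiring care are bookkeeping ones: confirming that the monodromy of $E_{g,e}^{\text{taut}}$ is precisely the canonical section $s$ of $r$ (this is where $2g-2\mid e$ enters, cf.\ \cite{mod-circle-bundle}), and checking that the structure group in the flatness characterization matches the group $\Homeo^+(X_{g,e})$ of orientation-preserving homeomorphisms acting trivially on the center of $\pi_1(X_{g,e})$ that appears in the definition of $\pi_{g,e}$—which holds because the fibers of the tautological bundle are coherently oriented and the central $S^1$ direction is preserved. With these identifications in place, the equivalence ``flat $\Leftrightarrow$ monodromy lifts to $\Homeo^+(X_{g,e})$'' together with Theorem~\ref{thm:main} yields the result immediately.
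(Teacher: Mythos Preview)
Your argument is correct and matches the paper's approach: the corollary is stated immediately after the observation that flatness of $E_{g,e}^{\text{taut}}$ is equivalent to the monodromy lifting to $\Homeo^+(X_{g,e})$, i.e.\ to $\pi_{g,e}$ splitting, and the paper treats it as a direct consequence of Theorem~\ref{thm:main} with no further proof. Your write-up simply makes this implication explicit.
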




\p{Short proof sketch of Theorem \ref{thm:main}} The proof strategy is similar to an argument of Chen--Salter \cite{Chen-Salter} that shows that $\Homeo^+(\Si_g)\to\Mod(\Sigma_g)$ does not split when $g\ge2$. Theorem \ref{thm:main} is proved by contradiction: assuming the existence of a splitting $\Mod(\Sigma_g)\to\Homeo^+(X_{g,e})$, first we obtain, by lifting, an action of the based mapping class group $\Mod(\Sigma_{g},*)$ on the cover $\what X_{g,e}\cong\mathbb R^2\times S^1$ corresponding to the center of $\pi_1(X_{g,e})$. The conditions on $g$ and $e$ in Theorem \ref{thm:main} guarantee the existence of a $\Z/2p\Z$ subgroup of $\Mod(\Sigma_g,*)$ for which we can show the action on $\what X_{g,e}$ has a fixed circle. Denoting a generator of $\Z/2p\Z$ by $\alpha$, we show that $\Mod(\Sigma_{g},*)$ is generated by the centralizers of $\alpha^2$ and $\alpha^p$. This shows that the entire group $\Mod(\Sigma_{g},*)$ acts on $\what X_{g,e}$ with a fixed circle, which contradicts the fact that the point-pushing subgroup $\pi_1(\Sigma_{g})<\Mod(\Sigma_{g},*)$ acts freely (by deck transformations) on $\what X_{g,e}$. 

\p{Other questions} 
Related to the $\Mod(\Si_g)$ action on the unit tangent bundle $U\Si_g$, we pose the following question. 

\begin{question}Do either of the following surjections split?  
\[\Diff^+(U\Si_g)\to\Mod(\Si_g)\>\>\>\text{ or }\>\>\>\Homeo(U\Si_g)\to\Mod(U\Si_g)\] 
\end{question}

If one includes orientation-reversing diffeomorphisms and mapping classes, then if $g\ge12$, then $\Diff(U\Si_g)\to\Mod^{\pm}(\Si_g)$ does not split by Souto \cite[Thm.\ 1]{Souto}. 

\p{Acknowledgement} The authors LC and BT are supported by NSF grants DMS2203178, DMS-2104346, and DMS-2005409. 

\section{Proof of Theorem \ref{thm:main}} 

Fix $g=4k-1$ and $e$ as in the theorem statement, and set $\Sigma=\Sigma_g$ and $X=X_{g,e}$. Suppose for a contradiction that there is a homomorphism 
\[\sigma:\Mod(\Sigma)\to\Homeo(X)\] whose composition with $\pi=\pi_{g,e}:\Homeo(X)\to\Mod(\Si)$ is the identity.

\subsection{Step 1: lifting argument}\label{sec:lifting}

Consider the covering space $\widehat X=\widetilde\Sigma\times S^1$ of $X$, where $\widetilde\Sigma\cong\R^2$ is the universal cover. This is the covering corresponding to the center $\zeta$ of $\pi_1(X)$. Given the action of $\Mod(\Sigma)$ on $X$, we consider the set of all lifts of homeomorphisms in this action to $\what X$. This is an action of the pointed mapping class group $\Mod(\Sigma,*)$ on $\what X$. To explain this, we start with the following general proposition. 

\begin{proposition}\label{prop:diagram}
Let $Y$ be a closed manifold. Let $\zeta<\pi_1(Y)$ be the center of the fundamental group, and denote $\Delta=\pi_1(Y)/\zeta$. Let $\widehat Y\to Y$ be the covering space with $\pi_1(\widehat Y)=\zeta$. Fix a basepoint $*\in Y$. Assume that the evaluation map \[\Homeo(Y)\to Y, \>\>\>\>f\mapsto f(*)\] induces a surjection $\pi_1\big(\Homeo(Y)\big)\onto\zeta<\pi_1(Y)$. Then there is a commutative diagram
\begin{equation}\label{eqn:diagram}
\begin{xy}
(-60,0)*+{1}="A";
(-30,0)*+{\Delta}="B";
(0,0)*+{\Homeo(\widehat Y)^{\Delta}}="C";
(30,0)*+{\Homeo(Y)}="D";
(60,0)*+{1}="E";
(-60,-15)*+{1}="F";
(-30,-15)*+{\Delta}="G";
(0,-15)*+{\Mod(Y,*)}="H";
(30,-15)*+{\Mod(Y)}="I";
(60,-15)*+{1}="J";
{\ar"A";"B"}?*!/_3mm/{};
{\ar "B";"C"}?*!/_3mm/{};
{\ar "C";"D"}?*!/_3mm/{q};
{\ar "D";"E"}?*!/_3mm/{};
{\ar"F";"G"}?*!/_3mm/{};
{\ar "G";"H"}?*!/_3mm/{};
{\ar "H";"I"}?*!/_3mm/{};
{\ar "I";"J"}?*!/_3mm/{};
{\ar@{=} "B";"G"}?*!/_3mm/{};
{\ar "C";"H"}?*!/_3mm/{\widehat p};
{\ar "D";"I"}?*!/_3mm/{p};
\end{xy}\end{equation}
whose rows are exact, where the bottom row is the (generalized) Birman exact sequence. Furthermore, this diagram is a pullback diagram. 
\end{proposition}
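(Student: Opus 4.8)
The plan is to realize both rows as homotopy exact sequences of evaluation-type fibrations and to compare them via the map induced by the covering $\rho\colon\widehat Y\to Y$. Throughout I interpret $\Homeo(\widehat Y)^{\Delta}$ as the group of homeomorphisms of $\widehat Y$ that normalize the deck group $\Delta$ (equivalently, those that descend to homeomorphisms of $Y=\widehat Y/\Delta$), and $q$ as the descent map. First I would establish the top row. The kernel of $q$ consists of the homeomorphisms covering $\id_Y$, which are exactly the deck transformations, so $\ker q=\Delta$; and $q$ is surjective because every $f\in\Homeo(Y)$ lifts to $\widehat Y$, which holds by the lifting criterion since $f_*$ preserves the characteristic subgroup $\zeta=Z(\pi_1 Y)$. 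This gives exactness of $1\to\Delta\to\Homeo(\widehat Y)^{\Delta}\xrightarrow{q}\Homeo(Y)\to 1$ at the level of topological groups.

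Next I would set up the two evaluation fibrations. The bottom row is the homotopy sequence of $\mathrm{ev}\colon\Homeo(Y)\to Y$, $f\mapsto f(*)$, whose fiber is $\Homeo(Y,*)$: the tail
\[\pi_1(\Homeo(Y))\xrightarrow{\mathrm{ev}_*}\pi_1(Y)\to\Mod(Y,*)\to\Mod(Y)\to 1\]
becomes the Birman sequence once the hypothesis $\im(\mathrm{ev}_*)=\zeta$ is invoked, since then $\ker\big(\pi_1(Y)\to\Mod(Y,*)\big)=\zeta$ and point-pushing identifies $\Delta$ with a subgroup of $\Mod(Y,*)$. Parallel to this, consider $\widehat{\mathrm{ev}}\colon\Homeo(\widehat Y)^{\Delta}\to\widehat Y$, $\widehat f\mapsto\widehat f(\widehat{*})$, where $\widehat{*}$ lies over $*$; its fiber is the subgroup of based lifts, which maps isomorphically to $\Homeo(Y,*)$ via $q$ (each based homeomorphism of $Y$ has a unique lift fixing $\widehat{*}$). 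The covering $\rho$ makes $q$ a morphism of fibrations from $\widehat{\mathrm{ev}}$ to $\mathrm{ev}$.

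The vertical maps come from passing to $\pi_0$. The key computation is that the connecting homomorphism $\pi_1(\Homeo(Y))\to\Delta$ for $q$ equals $\mathrm{ev}_*$ followed by $\pi_1(Y)\onto\Delta$, hence vanishes because $\im(\mathrm{ev}_*)=\zeta$ dies in $\Delta$. Concretely, a loop in $\Homeo(Y)$ representing $z\in\zeta$ lifts to a loop in $\Homeo(\widehat Y)^{\Delta}$, and evaluating at $\widehat{*}$ shows $\widehat{\mathrm{ev}}_*\colon\pi_1(\Homeo(\widehat Y)^{\Delta})\to\pi_1(\widehat Y)=\zeta$ is surjective. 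Therefore the connecting map of $\widehat{\mathrm{ev}}$ into $\pi_0$ of its fiber is zero, yielding an isomorphism $\Mod(Y,*)\cong\pi_0(\Homeo(\widehat Y)^{\Delta})$; I define $\widehat p$ to be the path-component projection followed by this isomorphism, and $p$ the usual projection. Naturality of $\pi_0$ with respect to the morphism of fibrations gives commutativity of the right square. Finally, once the diagram is a morphism of short exact sequences that is an isomorphism on the kernels $\Delta$, the cokernel square is automatically cartesian, which is exactly the pullback assertion; so it remains only to identify the induced map on kernels.

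The main obstacle is precisely this last identification: checking that $\widehat p$ carries a deck transformation $\delta\in\Delta\le\Homeo(\widehat Y)^{\Delta}$ to the corresponding point-pushing class in $\Mod(Y,*)$, so that the left square commutes with the identity as drawn. I expect to prove this by a path-lifting computation: choose a based loop $\alpha$ in $Y$ with $[\alpha]\mapsto\delta$, lift the associated point-pushing isotopy $(g_t)$ of $Y$ to an isotopy $(\widehat g_t)$ of $\widehat Y$ with $\widehat g_0=\id$, and observe that $\delta^{-1}\widehat g_1$ fixes $\widehat{*}$ and descends to $g_1$, so under $\pi_0(\Homeo(\widehat Y)^{\Delta})\cong\Mod(Y,*)$ the class of $\delta$ matches $\mathrm{Push}(\alpha)$ up to the orientation convention relating monodromy and point-pushing. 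Alongside this, the remaining technical points are the point-set facts that $\mathrm{ev}$ and $\widehat{\mathrm{ev}}$ are fibrations and that $q$ admits path lifting, all standard for homeomorphism groups of closed manifolds in the compact-open topology.
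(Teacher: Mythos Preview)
Your argument is correct and arrives at the same statement, but the packaging differs from the paper's. The paper defines $\widehat p$ by an explicit formula: for $f\in\Homeo(\widehat Y)^\Delta$, choose a path in $\widehat Y$ from $\widehat{*}$ to $f(\widehat{*})$, project to a path $\gamma_f$ in $Y$, extend $\gamma_f$ to an ambient isotopy $h_t$ with $h_0=\id$, and set $\widehat p(f)=[h_1\circ q(f)]$; it then checks well-definedness, the homomorphism property, and commutativity by hand, and proves the pullback statement by writing down an explicit inverse $\iota$ to $\widehat p\times q$. You instead run the long exact sequences of the two evaluation fibrations $\mathrm{ev}$ and $\widehat{\mathrm{ev}}$ and compare them via $q$, so that $\widehat p$ appears as $\pi_0$ followed by the isomorphism $\pi_0\big(\Homeo(\widehat Y)^\Delta\big)\cong\Mod(Y,*)$ coming from the vanishing connecting map; the pullback then drops out of the five lemma once the left square is identified. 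Your route is cleaner conceptually and makes the pullback assertion formal, while the paper's explicit $\widehat p$ has the advantage that one sees concretely how to move between a lift and a based mapping class, which is what gets used later (e.g.\ in Remark~\ref{rmk:lift-fp}).

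One small caution: your appeal to ``standard for homeomorphism groups of closed manifolds'' for $\widehat{\mathrm{ev}}$ is not literally applicable, since $\widehat Y$ is typically noncompact. The fix is already implicit in your setup: $q$ is a covering of topological groups with fiber $\Delta$, and $\rho\circ\widehat{\mathrm{ev}}=\mathrm{ev}\circ q$ is a fibration because $\mathrm{ev}$ is one for the closed manifold $Y$; since $\rho$ is a covering, unique path lifting upgrades this to the homotopy lifting property for $\widehat{\mathrm{ev}}$ itself. With that adjustment the argument goes through.
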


A version of Proposition \ref{prop:diagram} when $Y=\Sigma_g$ (whose center is trivial) is used in \cite{Chen-Salter}. 

We prove Proposition \ref{prop:diagram} after explaining how it gives the desired lifting. In our situation, the center of $\pi_1(X)$ is the kernel of $\pi_1(X)\to\pi_1(\Sigma)$ since $\pi_1(\Sigma)$ has trivial center. Thus Proposition \ref{prop:diagram} gives us the following diagram. 
\[\begin{xy}
(-60,0)*+{1}="A";
(-30,0)*+{\pi_1(\Sigma)}="B";
(0,0)*+{\Homeo(\widehat X)^{\Delta}}="C";
(30,0)*+{\Homeo(X)}="D";
(60,0)*+{1}="E";
(-60,-15)*+{1}="F";
(-30,-15)*+{\pi_1(\Sigma)}="G";
(0,-15)*+{\Mod(X,*)}="H";
(30,-15)*+{\Mod(X)}="I";
(60,-15)*+{1}="J";
(-60,-30)*+{1}="K";
(-30,-30)*+{\pi_1(\Sigma)}="L";
(0,-30)*+{\Mod(\Sigma,*)}="M";
(30,-30)*+{\Mod(\Sigma)}="N";
(60,-30)*+{1}="O";
{\ar"A";"B"}?*!/_3mm/{};
{\ar "B";"C"}?*!/_3mm/{};
{\ar "C";"D"}?*!/_3mm/{q};
{\ar "D";"E"}?*!/_3mm/{};
{\ar"F";"G"}?*!/_3mm/{};
{\ar "G";"H"}?*!/_3mm/{};
{\ar "H";"I"}?*!/_3mm/{};
{\ar "I";"J"}?*!/_3mm/{};
{\ar"K";"L"}?*!/_3mm/{};
{\ar "L";"M"}?*!/_3mm/{};
{\ar "M";"N"}?*!/_3mm/{};
{\ar "N";"O"}?*!/_3mm/{};
{\ar@{=} "B";"G"}?*!/_3mm/{};
{\ar "C";"H"}?*!/_3mm/{\widehat p};
{\ar "D";"I"}?*!/_3mm/{p};
{\ar@{^{(}->} "N";"I"}?*!/_3mm/{};
{\ar@{^{(}->} "M";"H"}?*!/_3mm/{};
{\ar@{=} "L";"G"}?*!/_3mm/{};
\end{xy}\]

The splitting $\sigma$ defines a subgroup $\Mod(\Sigma)<\Mod(X)$ and a splitting of $p$ over this subgroup. Since the top row is a pullback of the middle row, it follows that $\widehat p$ splits over $\Mod(\Sigma,*)$ (this uses only general facts about pullbacks). Denote this splitting by 
\[\widehat\sigma:\Mod(\Sigma,*)\to\Homeo(\widehat X)^{\Delta}.\] Under this splitting the point-pushing subgroup $\pi_1(\Sigma)$ acts by deck transformations. 

\begin{remark}\label{rmk:lift-fp}
If $G<\Mod(\Sigma)$ and $\sigma(G)$ has a fixed point $*$, then after choosing a lift $\what*$ of $*$, one can lift canonically elements of $\sigma(G)$ to $\what X$ by choosing the unique lift that fixes $\what *$. This implies that $G<\Mod(\Sigma)$ can be lifted to $G<\Mod(\Sigma,*)$ so that $\what\sigma(G)$ has a fixed point. 
\end{remark}

\begin{proof}[Proof of Proposition \ref{prop:diagram}]
First we recall the construction of the bottom row of diagram (\ref{eqn:diagram}). Evaluation at $*\in Y$ defines a fibration 
\[\Homeo(Y,*)\to \Homeo(Y)\xrightarrow{\epsilon} Y.\] 
The long exact sequence of homotopy groups gives an exact sequence
\[\pi_1\big(\Homeo(Y))\xrightarrow{\epsilon_*}\pi_1(Y)\to\Mod(Y,*)\to\Mod(Y)\to1.\]
In general the image of $\epsilon_*$ is contained in the center of $\pi_1(Y)$; see e.g.\ \cite[\S1.1, Exer.\ 20]{hatcher-algtop}. By assumption, $\epsilon_*$ surjects onto the center, so we obtain the short exact sequence in the bottom row of (\ref{eqn:diagram}). The homomorphism $\pi_1(Y)\to\Mod(Y,*)$ is the so-called ``point-pushing" homomorphism. It sends $\eta\in\pi_1(Y)$ (basepoint= $*$) to the time-1 map of an isotopy that pushes $*$ around $\eta$ in reverse (this follows directly from the definition of the connecting homomorphism in the long exact sequence; note that it makes sense for the reverse of $\eta$ to appear in defining this homomorphism since concatenation of paths is left-to-right, while composition of functions is right-to-left). 

Next we define $\widehat p:\Homeo(\widehat Y)^\Delta\to\Mod(Y,*)$. Fix a point $\widehat{*}\in\widehat Y$ that covers the basepoint $*\in Y$. Given $f\in\Homeo(\widehat Y)^{\Delta}$. Choose a path $[0,1]\to\widehat Y$ from $\widehat *$ to $f(\widehat *)$ and let $\gamma_f$ denote the composition $[0,1]\to\widehat Y\to Y$. By isotopy extension, there exists an isotopy $h_t:Y\to Y$ where $h_0=\id_Y$ and $h_t(*)=\gamma_f(t)$ for each $t\in[0,1]$. Define 
\[\widehat p(f)=[h_1\circ q(f)].\]
The map $\widehat p$ is well-defined. The choice of $\gamma_f$ is unique only up to an element of $\pi_1(\widehat Y)=\zeta$. This implies that the isotopy class $[h_1\circ f]$ is only well-defined up to composition by a point-pushing mapping class by an element of $\zeta$, but such a point-push is trivial by assumption. 

It is a straightforward exercise to check that $\widehat p$ is a homomorphism. The right square in diagram (\ref{eqn:diagram}) commutes because $q(f)$ and $h_1\circ q(f)$ are isotopic by construction. It is easy to see that the left square in the diagram commutes by applying the definition of $\widehat p$ to deck transformations. 

Finally, regarding the claim that the diagram is a pullback, we show that the map to the fibered product 
\[\widehat p\times q:\Homeo(\widehat Y)^\Delta\to\Mod(Y,*)\times_{\Mod(Y)}\Homeo(Y)\]
is an isomorphism. The codomain consists of pairs $(\phi,g)\in\Mod(Y,*)\times\Homeo(Y)$ such that $g$ is isotopic to a representative of the isotopy class $\phi$. 

We define an inverse $\iota$ to $\widehat p\times q$. Given $(\phi,g)$ in the fibered product, choose an isotopy $g_t$ from $g$ to a homeomorphism representing $\phi$. Lift $g_t$ to an isotopy $\widetilde g_t$ such that $\widetilde g_1$ fixes $\widehat*$, and define $\iota(\phi,g)=\widetilde g_0$. The reader can check that the maps $\iota$ and $\widehat p\times q$ are inverses. 
\end{proof}

\subsection{Step 2: finite group action rigidity} 

Recall that $g=4k-1$ and $k\ge3$ is not a power of 2; let $p$ be an odd prime dividing $k$. From Step 1, we have homomorphism $\what\sigma:\Mod(\Si,*)\to\Homeo(\what X)$ that descends to a splitting $\sigma:\Mod(\Si)\to\Homeo(X)$. In this section we describe the action of a particular finite subgroup of $\Mod(\Si,*)$ on $\what X$. 

\begin{proposition}\label{prop:fixed-set}
There exists an element $\alpha\in\Mod(\Sigma,*)$ of order $2p$ such that the fixed sets of $\what\sigma(\alpha)$, $\what\sigma(\alpha)^2$, and $\what\sigma(\alpha)^p$ coincide and are equal to an embedded circle $c\subset\what X$. 
\end{proposition}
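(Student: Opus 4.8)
The plan is to manufacture $\alpha$ from a carefully chosen periodic symmetry of $\Sigma$, reduce the whole proposition to computing a single rotation number on a fiber circle, and then pin that number down using rigidity of finite group actions together with the divisibility hypothesis on $e$.

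\emph{Choosing $\alpha$ and setting up a finite action.} First I would use the Riemann--Hurwitz formula to produce an orientation-preserving periodic homeomorphism $\phi$ of $\Sigma=\Sigma_{4k-1}$ of order $2p$ that fixes a point $*\in\Sigma$ as an isolated fixed point with local rotation angle $2\pi a/(2p)$, where $\gcd(a,2p)=1$; the hypotheses $g=4k-1$ and $p\mid k$ are exactly what make such a $\Z/2p$-action realizable, with quotient orbifold a sphere carrying a cone point of order $2p$ (together with enough auxiliary cone points to balance Riemann--Hurwitz). Setting $\alpha=[\phi]\in\Mod(\Sigma,*)$, torsion-freeness of the point-pushing subgroup $\pi_1(\Sigma)$ in the Birman sequence shows $\alpha$ has order exactly $2p$ and projects to an order-$2p$ class $\bar\alpha\in\Mod(\Sigma)$. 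Restricting the assumed splitting $\sigma$ then yields a genuine finite action $\Z/2p\to\Homeo(X)$ covering $\bar\alpha$ on $\Sigma$, and correspondingly the lift $\what\sigma$ on $\what X$.

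\emph{Reduction to a rotation number.} Next I would work on $\what X=\wtil\Sigma\times S^1=\R^2\times S^1$. Since elements of $\Homeo^+(X)$ act trivially on the center $\zeta$, the action preserves the circle fibration $\what X\to\wtil\Sigma$; and since $\phi$ fixes $*$, Remark \ref{rmk:lift-fp} lets me choose the lift of $\alpha$ fixing a point $\bar*\in\R^2$ over $*$. The induced homeomorphism of $\R^2$ is then a finite-order, orientation-preserving map, hence conjugate to a linear rotation with unique fixed point $\bar*$; because $\gcd(a,2p)=1$, the same holds for $\alpha^2$ and $\alpha^p$, which act on $\R^2$ as nontrivial rotations (by $a/p$ and $a/2\pmod 1$). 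Consequently $\Fix\big(\what\sigma(\alpha^i)\big)\subseteq F:=\{\bar*\}\times S^1$ for $i=1,2,p$, and each $\what\sigma(\alpha^i)$ preserves $F$, acting there as a finite-order orientation-preserving circle homeomorphism. Such a map is the identity if its rotation number is $0$ and is otherwise fixed-point-free, so the entire proposition reduces to showing that the rotation number of $\what\sigma(\alpha)$ on $F$ is $0$.

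\emph{Computing the rotation number (the crux).} The main obstacle is evaluating this fiber rotation number $r$. Here I would invoke rigidity of finite group actions on the aspherical Seifert fibered space $X$ (Meeks--Scott) to conjugate the finite action to a fiber-preserving isometric model over the Nielsen-realized isometric base action; since the rotation number of a circle homeomorphism is a topological conjugacy invariant, this does not change $r$. For a standard fiber-preserving action, $r$ is a Seifert-invariant computation: over a base fixed point with local rotation $a/(2p)$, the fiber of the Euler-number-$e$ bundle rotates by $r\equiv\frac{e}{2g-2}\cdot\frac{a}{2p}\pmod 1$. The normalization is fixed by the unit tangent bundle case $e=2g-2$, where the derivative action rotates the tangent circle by the same angle as the tangent plane, and the general case follows because the fiber rotation scales linearly in the fiberwise degree $e/(2g-2)$. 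The hypothesis $(2g-2)\,2p\mid e$ gives $e=(2g-2)\,2p\,t$, so $r\equiv a t\equiv 0\pmod 1$. This is where all the numerical hypotheses of the theorem are consumed, and rigorously justifying the Seifert-invariant formula (not merely its value up to scaling) is where the real work lies.

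\emph{Conclusion.} Combining the last two steps, $F$ is fixed pointwise by $\what\sigma(\alpha)$, hence also by $\what\sigma(\alpha)^2$ and $\what\sigma(\alpha)^p$, so the embedded circle $c:=F$ is contained in each of the three fixed sets; the reverse inclusions $\Fix\big(\what\sigma(\alpha^i)\big)\subseteq F$ from the reduction then force $\Fix\big(\what\sigma(\alpha)\big)=\Fix\big(\what\sigma(\alpha)^2\big)=\Fix\big(\what\sigma(\alpha)^p\big)=c$, as claimed.
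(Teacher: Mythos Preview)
Your argument has two substantial gaps.

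\textbf{Fibration preservation.} In your reduction step you assert that the action on $\what X$ preserves the circle fibration $\what X\to\widetilde\Sigma$ because $\Homeo^+(X)$ acts trivially on the center $\zeta$. This is not true: acting trivially on $\zeta$ only preserves the homotopy class of the fiber, not the fibration as a foliation. A general homeomorphism scrambles the fibers, so there is no induced finite-order homeomorphism of $\R^2$ and no fiber circle $F$ on which to measure a rotation number. Your later appeal to Meeks--Scott would produce a fiber-preserving model, but Meeks--Scott requires a \emph{smooth} action, and $\sigma$ is only continuous. The paper handles this by approximating the topological $D_{4k}$-action by smooth ones via Pardon, applying Meeks--Scott to those, and then passing back to the original action with a Smith-theory argument. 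You omit both the smoothing step and the Smith-theory step, and the latter is what actually shows the fixed set is \emph{exactly} a circle; your containment $\Fix\subseteq F$ rests on the unjustified fibration preservation.

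\textbf{The rotation-number formula.} Even granting a smooth isometric fiber-preserving model, your formula $r\equiv\tfrac{e}{2g-2}\cdot\tfrac{a}{2p}$ is not valid for an arbitrary lift. After geometrization the lifted action lands in $\Isom^+(\what X)\cong\Isom^+(\mathbb H^2)\times\SO(2)$, and the fiber rotation at the fixed point is the sum of the contribution from the $\Isom^+(\mathbb H^2)$ factor (which your formula computes) and the $\SO(2)$ component of the image of $\alpha$, which is \emph{not} determined by $e$ and the base rotation alone. For instance, when $e=0$ both $(x,t)\mapsto(\phi(x),t)$ and $(x,t)\mapsto(\phi(x),t+\tfrac{1}{2p})$ on $\Sigma\times S^1$ are isometric lifts of the same base $\Z/2p$-action, with fiber rotations $0$ and $\tfrac{1}{2p}$ respectively. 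The paper's construction is designed precisely to kill this $\SO(2)$ term: its $\alpha$ is a power of $h=[x,y]$, a \emph{commutator} in $\pi_1^{\orb}(O)$, so the $\SO(2)$-coordinate of $\rho(h)$ vanishes automatically because $\SO(2)$ is abelian. Your $\alpha$, built from a $\Z/2p$-action with spherical quotient, carries no such commutator relation, and nothing forces its $\SO(2)$ component to be trivial. This is the essential reason the paper works with the $D_{4k}$-action with torus quotient rather than a cyclic action with spherical quotient.
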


It is worth noting that the fixed set of a finite-order, orientation-preserving homeomorphism of a 3-manifold can be wildly embedded \cite{MZ}. 

In order to prove Proposition \ref{prop:fixed-set} we first construct the specific element $\alpha$. Then we prove (Proposition \ref{prop:fixed-set-smooth}) a weaker version of Proposition \ref{prop:fixed-set} with the additional assumption that the action is smooth. Finally, we combine this with a result of Pardon \cite{pardon} and Smith theory to prove Proposition \ref{prop:fixed-set}.

\p{\boldmath Construction of $\alpha$} We obtain $\alpha$ as an element in a dihedral subgroup $D_{4k}$ of $\Mod(\Sigma)$, where $D_{4k}$ denotes the dihedral group of order $8k$. The dihedral action $D_{4k}\car\Sigma$ we use has quotient $\Sigma/D_{4k}$ homeomorphic to $T^2$ and the quotient $\Sigma\to\Sigma/D_{4k}$ has a single branch point; it is determined by the homomorphism
\[\langle x,y\rangle = F_2\cong\pi_1(T^2\setminus\pt)\to D_{4k}=\langle a,b\mid a^{4k}=b^2=1, bab=a^{-1}\rangle\]
\[x\mapsto a,\>\>\>\> y\mapsto b.\]
By Riemann--Hurwitz, the genus of $\Sigma$ is $4k-1$. The orbifold $O=\Sigma/D_{4k}$ has fundamental group
\[
\pi_1^{orb}(O)=\langle x,y,h\mid h^{2k}=1,h=[x,y]\rangle,
\]
and there is a short exact sequence
\begin{equation}\label{eqn:orbifold-pi1}
1\to \pi_1(\Sigma)\to \pi_1^{orb}(O)\to D_{4k}\to 1.\end{equation}
This sequence induces a homomorphism $\pi_1^{orb}(O)\to\Mod(\Si,*)$. We take $\alpha=h^{k/p}$, where $p$, as defined above, is an odd prime dividing $k$, which exists by assumption. Then $\alpha$ is an element of order $2p$ in the subgroup $\langle h\rangle\cong\Z/2k\Z$ of $\pi_1^{orb}(O)<\Mod(\Si,*)$. 

\begin{remark}
The argument that follows works equally well when $\Sigma/D_{4k}$ is a genus-$g$ surface and $\Sigma\to\Sigma/D_{4k}$ has a single branched point. This provides more values of $g,e$ for which the conclusion of Theorem \ref{thm:main} holds. 
\end{remark}

\p{Smooth case} 
Here we prove the following proposition. 

\begin{proposition}\label{prop:fixed-set-smooth} 
Fix $D_{4k}<\Mod(\Sigma)$ as above. Suppose that 
$\sigma:D_{4k}\to\Diff^+(X)$ and is a splitting of $\pi:\Homeo^+(X)\to\Mod(\Si)$ over $D_{4k}$. Then $\what\sigma(\alpha)$ fixes a unique circle on $\what X=\mathbb H^2\times S^1$. Consequently, the fixed set of $\sigma(a^{2k/p})$ is nonempty. 
\end{proposition}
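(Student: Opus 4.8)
The plan is to reduce the statement to the vanishing of a single rotation number on a fiber of $\what X\to\mathbb H^2$. First I would put the finite action in standard form. Since every homeomorphism in $\Homeo^+(X)$ fixes the center of $\pi_1(X)$, it preserves the Seifert fibration $X\to\Si$ up to isotopy (the fibration is canonical because $\Si$ is hyperbolic). Averaging a metric and invoking Nielsen realization for the finite group $D_{4k}$, after a conjugation I may assume $\sigma(D_{4k})$ acts by fiber-preserving diffeomorphisms inducing a hyperbolic isometric action on $\Si$; this is harmless because the conclusions (an embedded fixed circle, and nonemptiness of a fixed set) are conjugation-invariant. Lifting to $\what X=\mathbb H^2\times S^1$, the homeomorphism $\what\sigma(\alpha)$ then preserves the projection $\what X\to\mathbb H^2$ and induces on $\mathbb H^2$ a finite-order isometry realizing $a^{2k/p}\in\Mod(\Si)$.

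Next I would localize the fixed set. As $a^{2k/p}$ is nontrivial in $\Mod(\Si)$, the induced isometry of $\mathbb H^2$ is a nontrivial elliptic isometry and so fixes a unique point $\tilde*$, a lift of the branch point. Hence $\what\sigma(\alpha)$ preserves exactly one fiber $F_{\tilde*}=\{\tilde*\}\times S^1$, and $\Fix\big(\what\sigma(\alpha)\big)\subseteq F_{\tilde*}$. Because the action is fiber-preserving, $\what\sigma(\alpha)$ restricts to a rotation of $F_{\tilde*}\cong S^1$ through an angle $s\in\R/\Z$; therefore $\Fix\big(\what\sigma(\alpha)\big)$ is all of $F_{\tilde*}$ if $s=0$ and is empty otherwise. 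Thus the proposition reduces to proving $s=0$: then $F_{\tilde*}$ is the unique fixed circle, and projecting to $X$ shows $\sigma(a^{2k/p})$ fixes the image fiber $F_*$ pointwise, which gives the ``consequently'' clause.

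The crux is the computation of $s$, and it is here that the hypothesis on $e$ enters. Since $F_{\tilde*}\to F_*$ has degree one, $s$ is the fiber rotation of $\sigma(a^{2k/p})=\sigma(a^2)^{k/p}$ on $F_*$, so $s\equiv\tfrac{k}{p}s_0\pmod1$ where $s_0$ is the fiber rotation of the generator $\sigma(a^2)$ of the stabilizer $\langle a^2\rangle\cong\Z/2k$ of $F_*$. I would compute $s_0$ from the quotient Seifert fibration $W=X/D_{4k}\to O$: the orbifold $O=\Si/D_{4k}$ is a torus with a single cone point of order $2k$, the fiber $F_*$ descends to a singular fiber with Seifert invariant $(2k,\beta)$, and the orbifold cover $\Si\to O$ of degree $|D_{4k}|=8k$ unwinds the cone point, whence $e=e(X)=8k\cdot e^{orb}(W)$. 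This gives $s_0\equiv\pm e/(8k)\pmod1$, which one can normalize against the unit tangent bundle (where $e=2g-2=8k-4$ forces the primitive value $s_0\equiv\pm1/(2k)$). Therefore
\[
s\equiv \tfrac{k}{p}\,s_0\equiv \pm\frac{e}{8p}\pmod1,
\]
and since $(2g-2)\cdot2p=8p(2k-1)$ divides $e$ we have $8p\mid e$ and hence $s\equiv0$. I expect this rotation computation to be the main obstacle: fixing the Seifert invariant of $W\to O$ and the exact multiplicativity (with signs and normalizations) of the Euler number under the branched cover $\Si\to O$ is the delicate point, while the reduction to a fiber-preserving isometric model and the confinement of the fixed set to a single fiber are comparatively routine.
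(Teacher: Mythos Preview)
Your overall architecture---rigidify, confine the fixed set to a single fiber circle, and reduce to a rotation-number computation---matches the paper's. The substantive difference is in how that rotation number is pinned down.

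The paper geometrizes via Meeks--Scott, then proves two structural claims: (i) the lifted group $\Lambda/\zeta$ is exactly $\pi_1^{\orb}(O)$, and (ii) $\Isom^+(\widehat X)\cong\Isom^+(\mathbb H^2)\times\SO(2)$. The decisive observation is then purely algebraic: in $\pi_1^{\orb}(O)$ the torsion generator is a \emph{commutator}, $h=[x,y]$, so under $\rho:\pi_1^{\orb}(O)\to\Isom^+(\mathbb H^2)\times\SO(2)$ the $\SO(2)$-component of $\rho(h)$ vanishes. This forces $\rho(h)$ to act on $\widehat X$ through the $\Isom^+(\mathbb H^2)$ factor alone, whose fiber action is explicit (trivial when $e=0$; rotation by $2\pi n/(2k)$ on the $n$-fold fiber quotient of $U\mathbb H^2$ when $e=(2g-2)n$). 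The divisibility $(2g-2)\cdot 2p\mid e$ then kills the rotation of $\rho(h^{k/p})$. No Seifert bookkeeping is needed.

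Your route instead passes through the quotient $W=X/\sigma(D_{4k})\to O$ and reads the fiber rotation off the Seifert data via $e=8k\cdot e^{\orb}(W)$. This is a legitimate alternative, and your endgame $s\equiv \pm e/(8p)\equiv 0$ lands correctly. Two cautions, though. First, ``averaging a metric and invoking Nielsen realization'' does not by itself produce a fiber-preserving action; you need the Meeks--Scott theorem (smooth finite group actions on Seifert manifolds are conjugate into the isometry group of a geometric structure) exactly as the paper uses it. Second, the passage from $e^{\orb}(W)$ to $s_0$ hides a unit: the local model identifies $s_0$ with $\beta/(2k)$ only after normalizing so that the generator rotates the disk by $2\pi/(2k)$, whereas $a^2$ rotates $T_*\Sigma$ by $2\pi r/(2k)$ for some $r$ coprime to $2k$; one gets $s_0\equiv r'\cdot e/(8k)$ with $rr'\equiv 1\pmod{2k}$. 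Fortunately $\gcd(r',2k)=1$ and $p\mid 2k$, so this unit is invertible mod $p$ and your conclusion $8p\mid e\Rightarrow s\equiv 0$ survives. Your ``normalize against $U\Sigma$'' check detects this unit but does not remove it.

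In short: your proof works once Meeks--Scott is invoked and the unit ambiguity is acknowledged. The paper's commutator trick is the cleaner substitute for your Seifert computation---it exploits the particular feature that the base orbifold is a torus with one cone point, so the cone generator is a commutator---whereas your method is more hands-on and would adapt more readily to quotients where the cone generator is not a commutator.
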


The last part of the statement of Proposition \ref{prop:fixed-set-smooth} follows from the preceding statement because the image of $\alpha$ under $\pi_1^{orb}(O)\to D_{4k}$ is $a^{2k/p}$.

\begin{proof}[Proof of Proposition \ref{prop:fixed-set-smooth}]
First we reduce to a more geometric setting. By Meeks--Scott \cite[Thm.\ 2.1]{meeks-scott}, the smooth(!) action $\sigma(D_{4k})\car X$ preserves some geometric metric on $X$. There are two possibilities for the geometry: if $e(X)=0$, then $X$ has $\mathbb{H}^2\times \mathbb{R}$-geometry, and if $e(X)\neq 0$, then $X$ has $\widetilde{\PSL_2(\mathbb{R})}$-geometry. We treat these cases in parallel. 

The universal cover $\widetilde X$ (with the induced geometric structure) is either $\mathbb H^2\times\R$ or $\widetilde{\PSL_2(\R)}$. In either case, $\wtil X$ has an isometric foliation by lines whose leaf space is isometric to $\mathbb H^2$, and this foliation is preserved by $\Isom(\wtil X)$, so there is a homomorphism $\Isom(\wtil X)\to\Isom(\mathbb H^2)$. Let $\Isom^+(\wtil X)<\Isom(\wtil X)$ be the group whose action on the leaves and on the leaf space are both orientation preserving. There is an exact sequence
\begin{equation}\label{eqn:isom-groups}
1\to\R\to \Isom^+(\wtil X)\xrightarrow{F}\Isom^+(\mathbb H^2)\to1.\end{equation}
See also \cite[\S4]{Scott}. 

Next consider the group $\Lambda$ of all lifts of elements of $\sigma(D_{4k})<\Isom(X)$ to $\Isom^+(\wtil X)$. This yields an exact sequence 
\[1\to\pi_1(X)\to\Lambda\to D_{4k}\to1.\]
The action of $\Lambda$ on $\wtil X$ induces an action of $\Lambda/\zeta$ on $\wtil X/\zeta=\what X\cong\mathbb H^2\times S^1$, where $\zeta$ is the center of $\pi_1(X)$. This action extends to an action of $\Isom^+(\wtil X)/\zeta$, and there is a homomorphism
\[\rho:\Lambda/\zeta\to\Isom^+(\wtil X)/\zeta\xrightarrow{\cong}\Isom(\what X).\]
The last map is an isomorphism by the general formula $\Isom(\wtil X/\Lambda)=N_{\Isom(\wtil X)}(\Lambda)/\Lambda$ for discrete subgroups $\Lambda<\Isom(\wtil X)$. 

To prove the proposition, we first identify $\Lambda/\zeta$ with $\pi_1^{orb}(O)$ (Claim \ref{claim:SES}). Then it is a formal consequence of our setup that $\rho(h^{k/p})=\what\sigma(\alpha)$, and after showing $\Isom^+(\wtil X)/\zeta\cong\Isom^+(\mathbb H^2)\times\SO(2)$ (Claim \ref{claim:isom-quotient}), we show that $\rho(h^{k/p})$ fixes a unique circle in $\what X$ (Claim \ref{claim:fixed-circle}). 

\begin{claim}\label{claim:SES}The restriction of the sequence (\ref{eqn:isom-groups}) to $\Lambda$ is a short exact sequence
\[1\to\zeta\to\Lambda\to\pi_1^{orb}(O)\to1\]
where $\zeta$ is the center of $\pi_1(X)$. 
\end{claim}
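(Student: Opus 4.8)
The plan is to identify separately the kernel and the image of the restriction $F|_\Lambda$, and match them with $\zeta$ and $\pi_1^{orb}(O)$. Since $\ker F=\R$ is the group of translations along the line leaves, the kernel of $F|_\Lambda$ is $\Lambda\cap\R$ and the image is $F(\Lambda)<\Isom^+(\mathbb H^2)$. First I would record the easy containment $\zeta\subseteq\Lambda\cap\R$: under the circle fibration $X\to\Si$ the $S^1$-fiber is the image of an $\R$-leaf, and the center $\zeta<\pi_1(X)$ is precisely the fiber subgroup, acting on $\wtil X$ by translating along the leaves; hence $\zeta\subseteq\ker F$. The same reasoning shows $\ker F\cap\pi_1(X)=\zeta$, since $\pi_1(\Si)=\pi_1(X)/\zeta$ acts faithfully on the leaf space as a cocompact Fuchsian group; thus $F$ identifies $\pi_1(X)/\zeta$ with $F(\pi_1(X))=\pi_1(\Si)$, the uniformizing group of $\Si$ on $\mathbb H^2$.

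Next I would identify the image. Because $\Lambda/\pi_1(X)\cong D_{4k}$ acts on $X=\wtil X/\pi_1(X)$ as $\sigma(D_{4k})$, we have $\wtil X/\Lambda=X/\sigma(D_{4k})$. The action $\sigma(D_{4k})$ is by leaf-preserving isometries, so it descends to leaf spaces, and passing to leaf spaces gives $\mathbb H^2/F(\Lambda)=\Si/D_{4k}$. Here $F(\Lambda)=\Lambda/(\Lambda\cap\R)$ is discrete (it contains $\zeta$, which is cocompact in $\R$, so $\Lambda\cap\R$ is discrete) and cocompact, hence a Fuchsian group realizing the orbifold $\Si/D_{4k}$ and containing $\pi_1(\Si)=F(\pi_1(X))$. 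The key point to pin down is that $\Si/D_{4k}$ carries the prescribed cone structure of $O$: the induced $D_{4k}$-action on the leaf space $\Si$ realizes the subgroup $D_{4k}<\Mod(\Si)$ determined by $\sigma$ (because $\sigma$ splits $\pi$), and a finite group action on a surface is determined up to conjugacy by its image in $\Mod(\Si)$, so its quotient orbifold agrees with the branched cover $\Si\to\Si/D_{4k}=O$ used to define $\alpha$. This yields $F(\Lambda)\cong\pi_1^{orb}(O)$.

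Finally I would pin down the kernel exactly. As $\zeta\subseteq\ker(F|_\Lambda)$, the map $F$ descends to a surjection $\overline F:\Lambda/\zeta\onto\pi_1^{orb}(O)$, and it is compatible with the two extensions $1\to\pi_1(\Si)\to\Lambda/\zeta\to D_{4k}\to1$ (induced by $\Lambda\to D_{4k}$) and the sequence (\ref{eqn:orbifold-pi1}): on the normal subgroup $\pi_1(\Si)=\pi_1(X)/\zeta$ it is the isomorphism $F(\pi_1(X))=\pi_1(\Si)$ from the first step, and on the quotient $D_{4k}$ it is the identity once we invoke the identification of the leaf-space action with the branched-cover action from the previous step. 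The five lemma then forces $\overline F$ to be an isomorphism, so $\ker(F|_\Lambda)=\Lambda\cap\R=\zeta$, giving the desired short exact sequence $1\to\zeta\to\Lambda\to\pi_1^{orb}(O)\to1$.

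I expect the main obstacle to be the geometric identification in the second step, namely verifying that the $D_{4k}$-action induced on the leaf space $\Si$ is conjugate to the defining branched-cover action, so that the quotient orbifold carries exactly the cone point of order $2k$ recorded in $\pi_1^{orb}(O)=\langle x,y,h\mid h^{2k}=1,\ h=[x,y]\rangle$; everything else is a formal consequence of the exact sequence (\ref{eqn:isom-groups}) together with the five lemma.
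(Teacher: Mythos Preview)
Your argument is correct and follows the paper's strategy closely: identify $F(\pi_1(X))=\pi_1(\Sigma)$, then $F(\Lambda)\cong\pi_1^{\orb}(O)$ via the fact that the induced $D_{4k}$-action on the leaf space $\Sigma$ realizes the given subgroup of $\Mod(\Sigma)$, then pin down the kernel. The paper phrases the image identification purely in terms of extensions (an extension of the centerless group $\pi_1(\Sigma)$ is determined by its monodromy $D_{4k}\to\Out(\pi_1(\Sigma))\cong\Mod(\Sigma)$), which is the same content as your orbifold statement; your appeal to conjugacy-uniqueness of finite actions is stronger than needed, since only the isomorphism type of $\pi_1^{\orb}(\Sigma/D_{4k})$ matters and that is already the extension with the prescribed monodromy.

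The one place you diverge is the kernel computation. The paper shows $\Lambda\cap\R=\zeta$ by hand: $\Lambda\cap\R$ is central in $\Lambda$ (since $\R$ is central in $\Isom^+(\wtil X)$), the center of $\Lambda$ maps trivially to $D_{4k}$ (since $\Lambda\to D_{4k}$ factors through the centerless group $\pi_1^{\orb}(O)$), hence lies in $\pi_1(X)$, and $\pi_1(X)\cap\R=\zeta$. Your five-lemma argument on the two extensions $1\to\pi_1(\Sigma)\to\Lambda/\zeta\to D_{4k}\to 1$ and (\ref{eqn:orbifold-pi1}) is a clean alternative and avoids the center computation entirely; it works because you already know the outer vertical maps are isomorphisms.

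One small wrinkle: your parenthetical ``it contains $\zeta$, which is cocompact in $\R$, so $\Lambda\cap\R$ is discrete'' is not a valid deduction as written (containing a cocompact subgroup does not force discreteness). What you want is that $\Lambda$ itself is discrete in $\Isom^+(\wtil X)$ (it acts properly discontinuously on $\wtil X$), hence $\Lambda\cap\R$ is discrete in $\R$; combined with $\zeta\subset\Lambda\cap\R$ this makes $\Lambda\cap\R$ a lattice, so $F(\Lambda)$ is discrete. In any case, discreteness of $F(\Lambda)$ is not actually needed for the identification, since the extension argument suffices.
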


\begin{proof}[Proof of Claim \ref{claim:SES}]
Recall the map $F:\Isom^+(\wtil X)\to\Isom^+(\mathbb H^2)$ from (\ref{eqn:isom-groups}). First we identify $F(\Lambda)<\Isom^+(\mathbb H^2)$ with $\pi_1^{orb}(O)$. For this, it suffices to show that $F(\Lambda)$ fits into a short exact sequence 
\begin{equation}\label{eqn:orbifold-extension}1\to\pi_1(\Sigma)\to F(\Lambda)\to D_{4k}\to1,\end{equation}
where the ``monodromy" $D_{4k}\to\Out^+\big(\pi_1(\Sigma)\big)\cong\Mod(\Sigma)$ has image the given subgroup $D_{4k}<\Mod(\Si)$. This implies that $F(\Lambda)\cong\pi_1^{orb}(O)$ because $\pi_1^{orb}(O)$ is an extension of the same form (see (\ref{eqn:orbifold-pi1})), and extensions of $\pi_1(\Sigma)$ are determined by their monodromy \cite[\S IV.3]{Brown}. 

To construct the extension (\ref{eqn:orbifold-extension}), first note that the restriction of (\ref{eqn:isom-groups}) to $\pi_1(X)$ is the short exact sequence 
\[1\to\zeta\to\pi_1(X)\to\pi_1(\Sigma)\to1.\] 
The group $\pi_1(\Sigma)=F(\pi_1(X))$ is normal in $F(\Lambda)$ because $\pi_1(X)$ is normal in $\Lambda$. Furthermore, the surjection $\Lambda\to F(\Lambda)$ induces a surjection $D_{4k}=\Lambda/\pi_1(X)\to F(\Lambda)/\pi_1(\Sigma)$. 

The quotient map $\wtil X\to\mathbb H^2$, which is equivariant with respect to $\Lambda\to F(\Lambda)$ descends to a map 
$X=\wtil X/\pi_1(X)\to \mathbb H^2/\pi_1(\Sigma)=\Sigma$ that's equivariant with respect to $D_{4k}=\Lambda/\pi_1(X)\to F(\Lambda)/\pi_1(\Sigma)$. 

Since $\sigma$ is a realization, the induced action of $\sigma(D_{4k})$ on $\Sigma$ is a realization of the $D_{4k}<\Mod(\Sigma)$, and in particular the $D_{4k}$ action on $\Sigma$ is faithful. Therefore, $F(\Lambda)/\pi_1(\Sigma)\cong D_{4k}$, and the monodromy of the associated extension
\[1\to\pi_1(\Sigma)\to F(\Lambda)\to D_{4k}\to1,\]
is the given inclusion $D_{4k}<\Mod(\Sigma)$. This concludes the proof that $F(\Lambda)$ is isomorphic to $\pi_1^{orb}(O)$. 

To finish the proof of Claim \ref{claim:SES}, it remains to show that the intersection of $\Lambda$ with $\R=\ker(F)$ is $\zeta$. We do this by showing (i) $\Lambda\cap\R$ is the center of $\Lambda$, and (ii) the center of $\Lambda$ is contained in $\pi_1(X)$. Together with the obvious containment $\zeta<\Lambda\cap\R$, (i) and (ii) imply $\Lambda\cap\R=\zeta$. 

(i): First note that $\Lambda\cap\R$ is central because $\mathbb R$ is central in $\Isom(\wtil X)$. On the other hand, the center of $\Lambda$ is contained in $\Lambda\cap\R$ because the center of $\Lambda/(\Lambda\cap\R)\cong\pi_1^{orb}(O)$ has trivial center. 

(ii): To show the center of $\Lambda$ is contained in $\pi_1(X)$, we show that the center of $\Lambda$ projects trivially to $D_{4k}=\Lambda/\pi_1(X)$. This is true because $\Lambda\to D_{4k}$ factors through $\pi_1^{orb}(O)$, which has trivial center.\end{proof}

We summarize the relation between the relevant groups in Diagram (\ref{eqn:extensions}). 

\begin{equation}\label{eqn:extensions}
\begin{xy}
(-40,0)*+{1}="A";
(-20,0)*+{\pi_1(X)}="B";
(0,0)*+{\Lambda}="C";
(20,0)*+{D_{4k}}="D";
(40,0)*+{1}="E";
(-40,-15)*+{1}="F";
(-20,-15)*+{\pi_1(\Sigma)}="G";
(0,-15)*+{\pi_1^{orb}(O)}="H";
(20,-15)*+{D_{4k}}="I";
(40,-15)*+{1}="J";
(-20,15)*+{\zeta}="X";
(0,15)*+{\zeta}="Y";
(-20,25)*+{1}="Z";
(0,25)*+{1}="W";
(-20,-25)*+{1}="U";
(0,-25)*+{1}="V";
{\ar"A";"B"}?*!/_3mm/{};
{\ar "B";"C"}?*!/_3mm/{};
{\ar "C";"D"}?*!/_3mm/{};
{\ar "D";"E"}?*!/_3mm/{};
{\ar"F";"G"}?*!/_3mm/{};
{\ar "G";"H"}?*!/_3mm/{};
{\ar "H";"I"}?*!/_3mm/{};
{\ar "I";"J"}?*!/_3mm/{};
{\ar "B";"G"}?*!/_3mm/{};
{\ar "C";"H"}?*!/_3mm/{};
{\ar@{=} "D";"I"}?*!/_3mm/{};
{\ar "X";"B"}?*!/_3mm/{};
{\ar "Y";"C"}?*!/_3mm/{};
{\ar "Z";"X"}?*!/_3mm/{};
{\ar "W";"Y"}?*!/_3mm/{};
{\ar "G";"U"}?*!/_3mm/{};
{\ar "H";"V"}?*!/_3mm/{};
{\ar@{=} "X";"Y"}?*!/_3mm/{};
\end{xy}\end{equation}

By Claim \ref{claim:SES}, $\Lambda/\zeta\cong\pi_1^{orb}(O)$, so $\rho$ takes the form
\[\rho:\pi_1^{orb}(O)\to\Isom^+(\wtil X)/\zeta\cong\Isom(\what X)\]
By construction, this homomorphism is the restriction of $\what\sigma:\Mod(\Si,*)\to\Homeo(\what X)$ to $\pi_1^{orb}(O)$. 
Since $\alpha=h^{k/p}$, to show the fixed set of $\what\sigma(\alpha)$ is a circle, it suffices to show the same statement for $\rho(h^{k/p})$. To prove this, we first compute $\Isom(\what X)\cong\Isom^+(\wtil X)/\zeta$. 

\begin{claim}\label{claim:isom-quotient}
The group $\Isom^+(\wtil X)/\zeta$ is isomorphic to $\Isom^+(\mathbb H^2)\times \SO(2)$. 
\end{claim}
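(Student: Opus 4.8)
The plan is to quotient the central extension (\ref{eqn:isom-groups}) by $\zeta$ and show that the resulting extension of $\Isom^+(\mathbb{H}^2)$ by $\SO(2)$ splits, the splitting being forced by the divisibility hypothesis $2g-2\mid e$. Since $\zeta$ is the fiber subgroup of $\pi_1(X)$, it is an infinite cyclic, cocompact subgroup of $\ker F=\mathbb{R}$ in (\ref{eqn:isom-groups}), and $\mathbb{R}$ is central in $\Isom^+(\wtil X)$. Quotienting (\ref{eqn:isom-groups}) by $\zeta$ therefore yields a central extension $1\to\mathbb{R}/\zeta\to\Isom^+(\wtil X)/\zeta\xrightarrow{\bar F}\Isom^+(\mathbb{H}^2)\to1$ with $\mathbb{R}/\zeta\cong\SO(2)$, and it suffices to produce a splitting of $\bar F$, together with the observation that a complement to a central $\SO(2)$ gives an internal direct product. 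The case $e=0$ is immediate: there $\wtil X=\mathbb{H}^2\times\mathbb{R}$ and $\Isom^+(\wtil X)=\Isom^+(\mathbb{H}^2)\times\mathbb{R}$ is already a product, so the quotient is $\Isom^+(\mathbb{H}^2)\times(\mathbb{R}/\zeta)\cong\Isom^+(\mathbb{H}^2)\times\SO(2)$. I will treat $e\neq0$, where $\wtil X=\widetilde{\PSL_2(\mathbb{R})}$, separately.

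For $e\neq0$ I would use the left-translation subgroup $L\cong\widetilde{\PSL_2(\mathbb{R})}<\Isom^+(\wtil X)$, on which $F$ restricts to the universal covering homomorphism $\widetilde{\PSL_2(\mathbb{R})}\to\PSL_2(\mathbb{R})=\Isom^+(\mathbb{H}^2)$ with kernel the center $\langle z\rangle\cong\mathbb{Z}$; this $\langle z\rangle$ sits inside the central $\mathbb{R}=\ker F$ as the ``integer'' lattice, so I identify $\ker F\cong\mathbb{R}$ with $z\leftrightarrow1$. Writing $Q=\Isom^+(\wtil X)/\zeta$, the key reduction is that $\bar F$ splits if and only if $z\in\zeta$. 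Indeed, the composite $L\hookrightarrow\Isom^+(\wtil X)\to Q$ is a homomorphism lifting the covering map $\widetilde{\PSL_2(\mathbb{R})}\to\PSL_2(\mathbb{R})$ through $\bar F$; any two such lifts differ by a homomorphism $\widetilde{\PSL_2(\mathbb{R})}\to\ker\bar F=\SO(2)$, which must be trivial because $\widetilde{\PSL_2(\mathbb{R})}$ is perfect. Hence this lift is the unique one, and it descends to a section $s:\PSL_2(\mathbb{R})\to Q$ precisely when it kills $\langle z\rangle$, that is, when $z\in\zeta$. Given such an $s$, since $\ker\bar F=\mathbb{R}/\zeta$ is central and meets $s(\PSL_2(\mathbb{R}))$ trivially, one gets $Q\cong\Isom^+(\mathbb{H}^2)\times\SO(2)$.

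It then remains to verify $z\in\zeta$, which is where the hypothesis enters. Lift the Fuchsian generators $\bar a_i,\bar b_i$ of $\pi_1(\Sigma)=F(\pi_1(X))$ to $\tilde a_i,\tilde b_i\in\pi_1(X)$. The product $\prod_i[\tilde a_i,\tilde b_i]$ lies in $\ker F=\mathbb{R}$, and the standard Euler-number computation for the Fuchsian representation (equivalently, the fact that $U\Sigma$ has Euler number $\pm(2g-2)$) gives $\prod_i[\tilde a_i,\tilde b_i]=z^{\pm(2g-2)}$. On the other hand, if $f$ generates $\zeta$ then the defining relation of $\pi_1(X)$ reads $\prod_i[\tilde a_i,\tilde b_i]=f^{\pm e}$, so under $\ker F\cong\mathbb{R}$ the generator $f$ sits at position $(2g-2)/e$. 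Writing $e=(2g-2)m$ with $m\in\mathbb{Z}$ (using $2g-2\mid e$), this gives $\zeta=\tfrac1m\mathbb{Z}$, so $z$, at position $1=m\cdot\tfrac1m$, indeed lies in $\zeta$. This completes the splitting and hence the claim.

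The main obstacle is the last step: locating $\zeta$ inside the central $\mathbb{R}$ and thereby establishing $z\in\zeta$. This is the computational heart of the argument and is exactly where the divisibility $2g-2\mid e$ is used; without it $\zeta$ need not contain $z$, the unique lift $L\to Q$ does not descend, and the extension genuinely fails to split (consistent with the non-divisible case being handled by different means in \cite{mod-circle-bundle}). The other point requiring care is the structural input that $\Isom^+(\wtil X)$ contains $L\cong\widetilde{\PSL_2(\mathbb{R})}$ with $F|_L$ the covering map and $\ker F$ central with $z$ at an integer position, but this follows from the explicit description of the isometry group of $\widetilde{\PSL_2(\mathbb{R})}$-geometry and the perfectness of $\widetilde{\PSL_2(\mathbb{R})}$.
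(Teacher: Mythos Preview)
Your proof is correct and follows essentially the same route as the paper: quotient the central extension (\ref{eqn:isom-groups}) by $\zeta$, handle $e=0$ by the product structure, and for $e\neq0$ use the left-translation copy of $\widetilde{\PSL_2(\mathbb{R})}$ to produce a section once you know its center $K=\langle z\rangle$ lies in $\zeta$. The paper phrases the identification $\zeta=\tfrac{1}{n}K$ (your $\zeta=\tfrac{1}{m}\mathbb{Z}$) via the $n$-fold fiberwise cover $U\Sigma\to X$, whereas you compute it by comparing the commutator product to $z^{\pm(2g-2)}$ and $f^{\pm e}$; these are the same calculation. Your additional remarks on uniqueness of the lift via perfectness of $\widetilde{\PSL_2(\mathbb{R})}$ and on the ``only if'' direction are correct but not needed for the claim.
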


\begin{proof}[Proof of Claim \ref{claim:isom-quotient}]
First note that there is an extension
\[1\to \SO(2)\to \Isom^+(\wtil X)/\zeta\to \Isom^+(\mathbb H^2)\to1\]
induced from (\ref{eqn:isom-groups}). This sequence is obviously split when $\wtil X=\mathbb H^2\times\R$ since $\Isom^+(\wtil X)\cong\Isom^+(\mathbb H^2)\times\R$ is a product. 

Assume now that $\wtil X=\wtil{\PSL_2(\R)}$, and write $e=(2g-2)n$ where $n$ is a nonzero integer. Let $K$ denote the kernel of the universal cover homomorphism $\wtil{\PSL_2(\R)}\to\PSL_2(\R)$. 

We claim that $\zeta=\frac{1}{n}K$. To see this, note that the extension 
\[1\to K\to\wtil{\PSL_2(\R)}\to\PSL_2(\R)\to1\] pulled back under a Fuchsian representation $\pi_1(\Si)\to\PSL_2(\R)$ induces the extension of the unit tangent bundle group $\pi_1(U\Si)$, which has Euler number $2-2g$, and there is an $n$-fold fiberwise cover $U\Si\to X$, so the center of $\pi_1(U\Sigma)<\pi_1(X)$ is generated by the $n$-the power of the generator of the center of $\pi_1(X)$, i.e.\ $\zeta=\frac{1}{n}K$.


The inclusion of $\wtil{\PSL_2(\R)}$ in $\Isom\big(\wtil{\PSL_2(\R)}\big)$ (given by left-multiplication) descends to a homomorphism 
\[\PSL_2(\R)=\wtil{\PSL_2(\R)}/K\to\Isom\big(\wtil{\PSL_2(\mathbb R)}\big)/K\onto\Isom\big(\wtil{\PSL_2(\mathbb R)}\big)/\zeta\] that defines a splitting of the sequence \[1\to\SO(2)\to\Isom(\wtil{\PSL_2(\R)})/\zeta\to\PSL_2(\R)\to1.\qedhere\]
\end{proof}

The following Claim \ref{claim:fixed-circle} is the last step in the proof of Proposition \ref{prop:fixed-set-smooth}. 

\begin{claim}\label{claim:fixed-circle}
Let $p$ be an odd prime dividing $k$. If $e$ is divisible by $(2g-2)2p$, then the fixed set of $\rho(h^{k/p})$ is a circle. 
\end{claim}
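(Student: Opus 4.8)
The plan is to reduce the claim to a single computation of a ``fiber rotation'' and then to read that rotation off from the Euler number. Using Claim \ref{claim:isom-quotient}, the group $\Isom(\what X)\cong\Isom^+(\mathbb{H}^2)\times\SO(2)$ preserves the fibration $\what X=\mathbb{H}^2\times S^1\to\mathbb{H}^2$, acting on the base through $F$ and on the $S^1$-fibers through the factor $\SO(2)=\R/\zeta$. The image $R:=F(\rho(h))$ is the image of the cone-point generator $h$ under the Fuchsian representation $\pi_1^{orb}(O)\to\PSL_2(\R)=\Isom^+(\mathbb{H}^2)$; since this representation is faithful, $R$ is elliptic of order exactly $2k$, i.e.\ a rotation about a unique point $z_0\in\mathbb{H}^2$. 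As $p\mid k$, the power $R^{k/p}$ is nontrivial (else $2k\mid k/p$, impossible), so it fixes only $z_0$. Consequently every fixed point of $\rho(h^{k/p})$ lies on the fiber $C:=\{z_0\}\times S^1$ (which is preserved, since $R^{k/p}$ fixes $z_0$), and on $C$ the map $\rho(h^{k/p})$ acts as a rotation. Thus $\Fix(\rho(h^{k/p}))$ is either empty or all of $C$, and it suffices to show this fiber rotation is trivial.

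To compute the rotation of $C$, I would lift to $\Isom^+(\wtil X)$. Fix a generator $w$ of $\zeta$; since $\zeta=\tfrac1n K$, I may take $w$ to be translation by $1/n$ along the $\R$-fibers of $\wtil X\to\mathbb{H}^2$, so that $w^n$ generates $K$. Let $\hat h=[\hat x,\hat y]\in\Lambda$ be the lift of $h$ coming from lifts $\hat x,\hat y$ of $x,y$, so $F(\hat h)=R$. Since $h^{2k}=1$ in $\pi_1^{orb}(O)=\Lambda/\zeta$ (Claim \ref{claim:SES}), we have $\hat h^{2k}\in\zeta$; write $\hat h^{2k}=w^m$. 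The element $\hat h$ preserves the fiber $\{z_0\}\times\R$ and acts there by an orientation-preserving isometry of $\R$, i.e.\ a translation by some $d$. Then $\hat h^{2k}$ translates this fiber by $2kd$, while $w^m$ translates it by $m/n$; hence $d=\tfrac{m}{2kn}$. Passing to $\what X$, the map $\rho(h^{k/p})$ rotates $C=\R/\zeta$ by $\tfrac{k}{p}d=\tfrac{m}{2pn}\pmod{\tfrac1n}$, which is trivial precisely when $2p\mid m$.

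It remains to show $2p\mid m$, and this is where the divisibility hypothesis on $e$ enters. The identity $\hat h^{2k}=w^m$ is the Seifert relation of the $S^1$-orbibundle $X/D_{4k}\to O$, and $m$ equals $2k$ times its rational orbifold Euler number. Concretely, restricting the diagram to $\pi_1(X)$ gives the central extension $1\to\zeta\to\pi_1(X)\to\pi_1(\Sigma)\to1$ of Euler number $e$, so $\prod_{i=1}^{g}[\hat a_i,\hat b_i]=w^{e}$ for standard generators. On the other hand, $\Sigma\to O$ is a regular branched cover of degree $|D_{4k}|=8k$ whose single cone point of order $2k$ has exactly $8k/2k=4$ smooth preimages; the genus-$g$ surface relator is therefore a product of four conjugates of the cone relator $\hat h^{2k}=w^m$, giving $w^{e}=w^{4m}$ and hence $m=e/4$. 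Since $g=4k-1$, we have $2g-2=4(2k-1)$, so the hypothesis $(2g-2)2p\mid e$ gives $8p(2k-1)\mid e$ and in particular $8p\mid e$; therefore $2p\mid e/4=m$. This proves the fiber rotation is trivial, so $\Fix(\rho(h^{k/p}))=C$ is an embedded circle.

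The main obstacle is the normalization in the final step: pinning down the exact integer $m$ for which $\hat h^{2k}=w^m$, and matching it to $e$. One must keep three normalizations consistent at once: the factor $1/(2k)$ by which the canonical lift of an order-$2k$ rotation translates its central fiber, the index-$n$ relation $\zeta=\tfrac1n K$ that sets the circumference of the fibers of $\what X$, and the degree $8k$ together with the branching of $\Sigma\to O$. Reading $m$ off directly from the Euler-number relation $\prod[\hat a_i,\hat b_i]=w^{e}$ in $\pi_1(X)$, as above, is the cleanest way to avoid sign and normalization errors and to obtain $m=e/4$ unambiguously.
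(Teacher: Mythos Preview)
Your argument is correct, but it takes a longer computational route than the paper's, and it has one misstatement and one case that needs separate treatment.

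The paper's proof hinges on an observation you set up but never exploit: since $\rho(h)=[\rho(x),\rho(y)]$ and the $\SO(2)$-factor of $\Isom^+(\mathbb H^2)\times\SO(2)$ is abelian, the $\SO(2)$-component of $\rho(h)$ is trivial. Thus $\rho(h)$ lies entirely in the $\Isom^+(\mathbb H^2)$ factor, and its action on the invariant fiber $C$ is read off directly from Remark~\ref{rmk:isomH-action}: trivial when $e=0$, and (after raising to the $(k/p)$-th power) an integer rotation when $e=(2g-2)n\neq0$ with $2p\mid n$. No Seifert invariant needs to be computed.

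Your approach instead identifies the integer $m$ with $\hat h^{2k}=w^m$ via covering-space theory, matching the Euler-class relation $\prod[\hat a_i,\hat b_i]=w^e$ against the expression of the surface relator as a product of four conjugates of the cone relator to get $m=\pm e/4$. This is valid (the sign, which you do not track, is irrelevant to the divisibility $2p\mid m$, and all four boundary loops carry the same sign because the cover is regular). It yields more than the claim requires---the actual Seifert invariant of the orbifold bundle $X/D_{4k}\to O$---at the cost of a longer argument. Two caveats: first, your opening assertion that $\Isom(\what X)$ acts ``on the $S^1$-fibers through the factor $\SO(2)$'' is wrong when $e\neq0$, since by Remark~\ref{rmk:isomH-action} the $\Isom^+(\mathbb H^2)$ factor also moves points along fibers; fortunately your lifted computation in $\Isom^+(\wtil X)$ does not rely on this and correctly captures the full fiber translation. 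Second, your normalization $\zeta=\tfrac1nK$ and the formula $d=m/(2kn)$ presuppose $e\neq0$; when $e=0$ you should argue directly (e.g.\ $\hat h=[\hat x,\hat y]$ has trivial $\R$-component in the product $\Isom^+(\mathbb H^2)\times\R$, so $d=0$).
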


Before proving the claim, we explain how the factors of $\Isom^+(\mathbb H^2)\times\SO(2)\cong \Isom^+(\what X)$ act on $\what X=\wtil X/\zeta$. 

\begin{remark}\label{rmk:isomH-action}
Consider the isomorphism $\Isom(\what X)\cong\Isom^+(\mathbb H^2)\times\SO(2)$ from Claim \ref{claim:isom-quotient}. In each case ($e=0$ or $e\neq0$) the action of $\SO(2)$ on $\what X$ covers the identity of $\mathbb H^2$ and acts freely by rotation on the circle fibers of $X\to\mathbb H^2$. For the $\Isom^+(\mathbb H^2)$ action, when $e=0$, then $\what X\cong\mathbb H^2\times S^1$ is a metric product, and the action of $\Isom^+(\mathbb H^2)$ is trivial on the $S^1$ factor and is the natural action on $\mathbb H^2$. If $e=(2g-2)n$ is nonzero, then 
\[\what X\cong\wtil{\PSL_2(\R)}/\zeta\cong\PSL_2(\R)/(\Z/n\Z),\]
and with respect to this isomorphism, the action of $\Isom^+(\mathbb H)\cong\PSL_2(\R)$ on $\what X$ is induced from left multiplication of $\PSL_2(\R)$ on $\PSL_2(\R)$. Identifying $\PSL_2(\R)$ with the unit tangent $U\mathbb H^2$, we can also view $\PSL_2(\R)/(\Z/n\Z)$ as the quotient of $U\mathbb H^2$ by the $\Z/n\Z$ action that covers the identity of $\mathbb H^2$ and rotates each fiber. 
\end{remark}

\begin{proof}[Proof of Claim \ref{claim:fixed-circle}]
Write $e=(2g-2)2pm$ for some integer $m$. 

First note that since $\rho(h)$ has finite order, the induced isometry of $\mathbb H^2$ has a unique fixed point, so $\rho(h)$ preserves a unique circle $C$ of the fibering $\what X\to\mathbb H^2$. The same is true for $\rho(h^{k/p})$, and we will show that $\rho(h^{k/p})$ acts trivially on $C$. 

Since $h=[x,y]$ is a commutator in $\pi_1^{orb}(O)$ and $\SO(2)$ is abelian, we find that the projection 
\[\rho(h)\in\Isom^+(\what X)\cong\Isom^+(\mathbb H^2)\times\SO(2)\to\SO(2)\]
is trivial. Therefore, the action of $\rho(h)$ on $\what X$ factors through $\Isom^+(\mathbb H^2)$ acting on $\what X$. This action is described in Remark \ref{rmk:isomH-action}. If $e=0$, since $\Isom^+(\mathbb H^2)$ acts trivially on the $S^1$ factor of $\what X\cong\mathbb H^2\times S^1$, we conclude that $\rho(h)$ acts trivially on $C$. If $e\neq0$, then $\rho(h)$ acts as a a rotation by $2\pi(pm/k)$ on $C$, so $\rho(h^{k/p})$ acts as a rotation by $2\pi m$, which is trivial. 
\end{proof}

This completes the proof of Proposition \ref{prop:fixed-set-smooth}.
\end{proof}

\p{Homeomorphism case} Here prove Proposition \ref{prop:fixed-set}. 

\begin{proof}[Proof of Proposition \ref{prop:fixed-set}]
By Pardon \cite[Thm.\ 1.1]{pardon}, there is a sequence of smooth $D_{4k}$ actions converging in $\Hom\big(D_{4k},\Homeo(X)\big)$ to the given action of $\sigma(D_{4k})$ on $X$. Sufficiently close approximates also give a splitting of $\pi$ over $D_{4k}<\Mod(\Sigma)$ because $\Homeo(X)$ is locally path connected \cite{Kirby-Edwards}. 

For each of the smooth approximations of $\sigma(D_{4k})$, the fixed set of $a^{2k/p}$ is nonempty by Proposition \ref{prop:fixed-set-smooth}. This implies that $\sigma(a^{2k/p})$ has a fixed point (a sequence of fixed points, one for each smooth action, sub-converges to a fixed point of the $\sigma(a^{2k/p})$ action). By Remark \ref{rmk:lift-fp}, there exists a lift of $a^{2k/p}\in D_{4k}<\Mod(\Sigma)$ to a finite order element $\alpha'\in \pi_1^{orb}(O)<\Mod(\Sigma,*)$ so that $\what\sigma(\alpha')$ has a fixed point. Since $\pi_1^{orb}(O)$ has a unique conjugacy class of finite subgroup of order $2p$, the subgroups $\langle\alpha'\rangle$ and $\langle\alpha\rangle$ are conjugate, so the fixed set of $\what\sigma(\alpha)$ is nonempty. 


It remains to show the fixed set of $\what\sigma(\alpha)$ is a circle, and that this circle is the same as the fixed sets of $\what\sigma(\alpha)^2$ and $\what\sigma(\alpha)^p$. 

First we show (using Smith theory) that both $\what\sigma(\alpha)^2$ and $\what\sigma(\alpha)^p$ have fixed set a single circle (we are not yet claiming/arguing that the fixed sets of $\what\sigma(\alpha)^2$ and $\what\sigma(\alpha)^p$ are the same). To see this, we focus on $\what\sigma(\alpha)^2$ for concreteness. Consider the group $\Lambda_0$ of all lifts of powers of $\what\sigma(\alpha)^2$ to the universal cover $\widetilde X$. This group is an extension 
\[1\to\Z\to\Lambda_0\to\Z/p\Z\to1,\]
which is central and split; hence $\Lambda_0\cong\Z\times\Z/p\Z$. It is central because $\alpha$ acts orientation-preservingly on fibers of $X\to\Sigma$ (otherwise, the action of $\alpha$ on $\Sigma$ would reverse orientation, contrary to the construction); it splits because $\what\sigma(\alpha)$ has a fixed point. The (lifted) action of $\what\sigma(\alpha)^2$ on $\widetilde X$ has fixed set a line (i.e.\ embedded copy of $\R$) by Smith theory and local Smith theory \cite[Theorem 20.1]{Bredon}, and this line is preserved and acted properly by $\Z<\Lambda_0$; thus $\what\sigma(\alpha)^2$ acts on $\what X$ with a circle in its fixed set. Furthermore, each component of the fixed set of $\what\sigma(\alpha)^2$ acting on $\what X$ corresponds to a distinct conjugacy class of order-$p$ subgroup of $\Lambda_0$. Since there is only one $\Z/p\Z$ subgroup of $\Lambda_0$, the fixed set of $\what\sigma(\alpha)^2$ is connected, i.e.\ a single circle. The same argument\footnote{Smith theory applies to prime-order finite cyclic group actions, so we cannot apply this argument directly to $\what\sigma(\alpha)$.} works for $\what\sigma(\alpha)^p$. 

Now we determine the fixed set of $\what\sigma(\alpha)$. First observe that $\what\sigma(\alpha)$ preserves the fixed set of $\what\sigma(\alpha)^2$ and has a fixed point there (the fixed set of $\what\sigma(\alpha)$ is nonempty and contained in the fixed set of $\what\sigma(\alpha)^2$). The only $\Z/2p\Z$ action on the circle with a fixed point is the trivial action, so in fact the fixed sets of $\what\sigma(\alpha)$ and $\what\sigma(\alpha)^2$ are the same. The same argument applies to $\what\sigma(\alpha)$ and $\what\sigma(\alpha)^p$. This proves Proposition \ref{prop:fixed-set}. 
\end{proof}

\subsection{Step 3: centralizer argument}

Recall that we have defined $\alpha$ as an element of order $2p$ in $\pi_1^{orb}(O)<\Mod(\Sigma,*)$. In this step we prove that $\Mod(\Sigma,*)$ is generated by the centralizers of $\alpha^2$ and $\alpha^p$.

\begin{proposition}[centralizer property]\label{prop:centralizer}
Let $\alpha\in\Mod(\Sigma,*)$ be the element of order $2p$ constructed above. Then 
\[\Mod(\Sigma,*)=\langle C(\alpha^2),C(\alpha^p)\rangle,\] where $C(-)$ denotes the centralizer in $\Mod(\Sigma,*)$. 
\end{proposition}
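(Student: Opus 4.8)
The plan is to study the statement through the (generalized) Birman exact sequence
\[1\to\pi_1(\Sigma)\to\Mod(\Sigma,*)\xrightarrow{\Pi}\Mod(\Sigma)\to1,\]
in which $\pi_1(\Sigma)$ is the point-pushing subgroup. Write $\beta=\alpha^2$ and $\gamma=\alpha^p$; since $\gcd(2,p)=1$ we have $\langle\beta,\gamma\rangle=\langle\alpha\rangle$, and under $\pi_1^{orb}(O)\to D_{4k}$ the images are $\Pi(\beta)=a^{4k/p}$ (order $p$) and $\Pi(\gamma)=a^{2k}$ (order $2$), both powers of the order-$2p$ rotation $\Pi(\alpha)=a^{2k/p}$ about $*$. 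Set $H=\langle C(\beta),C(\gamma)\rangle$. I will prove $H=\Mod(\Sigma,*)$ by establishing (a) $\Pi(H)=\Mod(\Sigma)$, and (b) $H\supseteq\pi_1(\Sigma)$. Statement (a) gives $H\cdot\pi_1(\Sigma)=\Mod(\Sigma,*)$, so together with (b) it forces $H=\Mod(\Sigma,*)$.

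For (a), note first that $\Pi\big(C(\beta)\big)\subseteq C_{\Mod(\Sigma)}\big(\Pi\beta\big)$ and $\Pi\big(C(\gamma)\big)\subseteq C_{\Mod(\Sigma)}\big(\Pi\gamma\big)$, and conversely that any mapping class commuting with $\Pi\beta$ realized by a homeomorphism fixing $*$ lifts into $C(\beta)$; via Birman--Hilden this identifies $\Pi\big(C(\beta)\big)$ with the mapping class group of the quotient orbifold $\Sigma/\langle\Pi\beta\rangle$ with a marked point (and similarly for $\gamma$). Computing the genera of these quotients from the $D_{4k}$-action (whose quotient is $T^2$ with one branch point) by Riemann--Hurwitz, I would then show the two symmetric subgroups $C_{\Mod(\Sigma)}(\Pi\beta)$ and $C_{\Mod(\Sigma)}(\Pi\gamma)$ jointly generate $\Mod(\Sigma)$ — for instance by writing the Dehn twist about each curve in a Humphries-type generating set as a product of twists about curves invariant under $\Pi\beta$ or under $\Pi\gamma$ (the reflection $b$, which centralizes $a^{2k}=\Pi\gamma$, provides extra room here). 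Verifying this generation is part of the work.

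The real content is (b), and it is the main obstacle. The subtlety is that \emph{neither centralizer contains any nontrivial point-push}: since $\pi_1^{orb}(O)$ is a cocompact orientation-preserving Fuchsian group, the centralizer in $\pi_1^{orb}(O)$ of the elliptic element $\beta$ (resp.\ $\gamma$) is the maximal finite cyclic subgroup $\langle h\rangle$, which meets $\pi_1(\Sigma)$ trivially; and because $\beta,\gamma$ fix $*$, their conjugation action on the point-pushing subgroup agrees with conjugation inside $\pi_1^{orb}(O)$, so $C(\beta)\cap\pi_1(\Sigma)=C(\gamma)\cap\pi_1(\Sigma)=\{1\}$. Hence point-pushes in $H$ can only arise from products of elements of the two centralizers: if $u\in C(\beta)$ and $v\in C(\gamma)$ have images that commute in $\Mod(\Sigma)$, then $[u,v]\in\ker\Pi\cap H=\pi_1(\Sigma)\cap H$ is a point-push. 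The propagation step is comparatively soft: once $\Pi(H)=\Mod(\Sigma)$ by (a), the subgroup $H$ is stable under conjugation by a full set of lifts of $\Mod(\Sigma)$, and nonseparating simple closed curve classes form a single $\Mod(\Sigma)$-orbit and generate $\pi_1(\Sigma)$, so it suffices to produce one nontrivial point-push in $H$ whose class is a nonseparating simple closed curve.

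The crux, then, is producing that single point-push, and I expect this to be an obstruction-theoretic phenomenon. By the triviality computed above, each $\psi\in C_{\Mod(\Sigma)}(\Pi\alpha)=C_{\Mod(\Sigma)}(\Pi\beta)\cap C_{\Mod(\Sigma)}(\Pi\gamma)$ has at most one lift into $C(\beta)$ and at most one into $C(\gamma)$; when both exist their quotient is a canonically defined point-push in $H$, and it is nontrivial precisely when $\psi$ fails to lift into $C(\beta)\cap C(\gamma)=C(\alpha)$, i.e.\ when simultaneous liftability fails. The main work — and where the hypotheses on $k$ and $p$ should enter — is to exhibit such a $\psi$ (realized by a homeomorphism symmetric with respect to $\beta$ fixing $*$, and separately with respect to $\gamma$ fixing $*$, but not with respect to $\alpha$) and to verify non-vanishing of the resulting class in $\pi_1(\Sigma)$. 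I would look for $\psi$ supported near the fixed loci of the rotations, arranging the difference of the two lifts to be a push around a small loop linking distinct fixed points of $\beta$ and of $\gamma$, and choosing the configuration so that this loop is nonseparating.
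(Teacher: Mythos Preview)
Your plan is a genuinely different route from the paper, but it has real gaps at both of its two steps.

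\textbf{What the paper actually does.} The paper never touches the Birman sequence. It proves $\Gamma:=\langle C(\alpha^2),C(\alpha^p)\rangle=\Mod(\Sigma,*)$ directly by producing Dehn twists in $\Gamma$. The key device is a \emph{symmetry-breaking lemma}: if $\tau_d\in\Gamma$, if $c$ meets $d$ once, and if either $\alpha^p(c)$ is disjoint from $c\cup d$ or the $\langle\alpha^2\rangle$-orbit of $c$ consists of pairwise disjoint curves disjoint from $d$, then $\tau_c\in\Gamma$ (proved via the braid relation and the fact that a multitwist along an $\alpha^2$- or $\alpha^p$-orbit lies in the relevant centralizer). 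Iterating this together with \emph{subsurface stabilization} (if $\Mod(S)\subset\Gamma$ and $\tau_c\in\Gamma$ with $c\cap S$ a single arc, then $\Mod(S\cup N(c))\subset\Gamma$) on an explicit model of $(\Sigma,*)$ with the $\Z/2p$-action, the paper builds a chain of curves $c_0,c_1,\ldots$ that fills $\Sigma\setminus\{*\}$, all with $\tau_{c_i}\in\Gamma$. That is the entire argument.

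\textbf{Where your plan breaks.}
First, your propagation step in (b) is not valid as stated. From $\Pi(H)=\Mod(\Sigma)$ you correctly get that $N:=H\cap\pi_1(\Sigma)$ is $H$-invariant, and hence $N$ meets every $\Mod(\Sigma)$-orbit of \emph{conjugacy classes} in $\pi_1(\Sigma)$. But that only shows the $\pi_1(\Sigma)$-normal closure of $N$ is all of $\pi_1(\Sigma)$; it does not force $N=\pi_1(\Sigma)$. Concretely, knowing $a_1\in N$ and that some $p\,b_1 p^{-1}\in N$ for an uncontrolled $p\in\pi_1(\Sigma)$ does not put $b_1$ itself into $N$. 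Nothing prevents $N$ from being a proper (infinite-index) $H$-invariant subgroup, so one nonseparating push is not enough.

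Second, your step (a) is not carried out and the sketch underestimates the difficulty. You propose to express each Humphries twist as a product of twists about curves \emph{invariant} under $\Pi\beta$ or $\Pi\gamma$. But twists about invariant curves give only multitwists that commute with the rotations; getting \emph{non}-invariant twists into $\Gamma$ is precisely the content of the paper's symmetry-breaking lemma, and you would end up needing the same mechanism. Moreover, your identification $\Pi(C(\beta))=C_{\Mod(\Sigma)}(\Pi\beta)$ is not automatic: elements of $C_{\Mod(\Sigma)}(\Pi\beta)$ are represented by $\beta$-equivariant homeomorphisms that may permute the four fixed points of $\beta$, so their lifts to $\Mod(\Sigma,*)$ need not commute with $\beta$; you only get the stabilizer of $*$ this way.

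Finally, the heart of your (b)---producing a specific $\psi$ whose two lifts differ by a nonseparating push---is only conjectured, not constructed; and even if you found one, the propagation gap above would remain. The paper's approach sidesteps all of this by working curve-by-curve in $\Mod(\Sigma,*)$ from the start.
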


\p{Strategy for proving Proposition \ref{prop:centralizer}} 
Set $\Gamma=\langle C(\alpha^2),C(\alpha^p)\rangle$. Our method for showing $\Gamma=\Mod(\Sigma,*)$, which is similar to the proof of \cite[Thm.\ 1.1]{Chen-Salter}, is to inductively build subsurfaces 
\begin{equation}\label{eqn:subsurfaces}S_0\subset S_1 \subset \cdots \subset S_N \subset \Sigma\setminus\{*\}\end{equation} such that $\Mod(S_n)\subset \Gamma$ for each $n$ and $S_N$ fills $\Sigma\setminus\{*\}$ (i.e.\ each boundary component of $S_N$ is inessential in $\Sigma\setminus\{*\}$). The fact that $S_N$ fills implies that $\Mod(S_N)=\Mod(\Sigma,*)$, so then $\Mod(\Sigma,*)\subset\Gamma$ by the last step in the inductive argument. 

In order to ensure that $\Mod(S_n)\subset\Gamma$, the subsurface $S_n$ is obtained from $S_{n-1}$ by an operation known as \emph{subsurface stabilization}. If  $S\subset \Sigma$ is a subsurface and $c\subset \Sigma$ is a simple closed curve that intersects $S$ in a single arc, then the \emph{stabilization of $S$ along $c$} is the subsurface $S\cup N(c)$, where $N(c)$ is a regular neighborhood of $c$. It is easy to show that $\Mod(S\cup N(c))$ is generated by $\Mod(S)$ and the Dehn twist $\tau_c$ \cite[Lem.\ 4.2]{Chen-Salter}, so if $\Mod(S)\subset\Gamma$ and $\tau_c\in\Gamma$, then $\Mod(S\cup N(c))\subset\Gamma$. Therefore, for the proof, it suffices to find a sequence of subsurface stabilizations along curves whose Dehn twist belongs to $\Gamma=\langle C(\alpha^2),C(\alpha^p)\rangle$.

\p{\boldmath Model for the $\alpha$ action} Our proof of Proposition \ref{prop:centralizer} makes use of an explicit model for $\Sigma$ with its $\alpha$ action, which is pictured below in the case $k=6$ and $p=3$ (recall that $g=4k-1$ and $p$ is an odd prime dividing $k$).  

\vspace{.1in}
\begin{figure}[h!]
\labellist
\hair 2pt
\pinlabel \textcolor{blue}{$1$} at 87 154
\pinlabel \textcolor{blue}{$2$} at 87 208
\pinlabel \textcolor{blue}{$3$} at 87 262
\pinlabel \textcolor{blue}{$4$} at 87 320
\pinlabel \textcolor{blue}{$5$} at 87 373
\pinlabel \textcolor{blue}{$6$} at 87 429

\pinlabel \textcolor{blue}{$7$} at 173 154
\pinlabel \textcolor{blue}{$8$} at 173 208
\pinlabel \textcolor{blue}{$9$} at 173 262
\pinlabel \textcolor{blue}{$10$} at 173 320
\pinlabel \textcolor{blue}{$11$} at 173 373
\pinlabel \textcolor{blue}{$12$} at 173 429

\pinlabel \textcolor{blue}{$1$} at 326 154
\pinlabel \textcolor{blue}{$2$} at 326 208
\pinlabel \textcolor{blue}{$3$} at 326 262
\pinlabel \textcolor{blue}{$4$} at 326 320
\pinlabel \textcolor{blue}{$5$} at 326 373
\pinlabel \textcolor{blue}{$6$} at 326 429

\pinlabel \textcolor{blue}{$7$} at 373 154
\pinlabel \textcolor{blue}{$8$} at 373 208
\pinlabel \textcolor{blue}{$9$} at 373 262
\pinlabel \textcolor{blue}{$10$} at 373 320
\pinlabel \textcolor{blue}{$11$} at 373 373
\pinlabel \textcolor{blue}{$12$} at 373 429

\pinlabel \textcolor{red}{$7$} at 433 154
\pinlabel \textcolor{red}{$8$} at 433 208
\pinlabel \textcolor{red}{$9$} at 433 262
\pinlabel \textcolor{red}{$10$} at 433 320
\pinlabel \textcolor{red}{$11$} at 433 373
\pinlabel \textcolor{red}{$12$} at 433 429

\pinlabel \textcolor{red}{$1$} at 480 154
\pinlabel \textcolor{red}{$2$} at 480 208
\pinlabel \textcolor{red}{$3$} at 480 262
\pinlabel \textcolor{red}{$4$} at 480 320
\pinlabel \textcolor{red}{$5$} at 480 373
\pinlabel \textcolor{red}{$6$} at 480 429

\pinlabel \textcolor{red}{$7$} at 637 154
\pinlabel \textcolor{red}{$8$} at 637 208
\pinlabel \textcolor{red}{$9$} at 637 262
\pinlabel \textcolor{red}{$10$} at 637 320
\pinlabel \textcolor{red}{$11$} at 637 373
\pinlabel \textcolor{red}{$12$} at 637 429

\pinlabel \textcolor{red}{$1$} at 722 154
\pinlabel \textcolor{red}{$2$} at 722 208
\pinlabel \textcolor{red}{$3$} at 722 262
\pinlabel \textcolor{red}{$4$} at 722 320
\pinlabel \textcolor{red}{$5$} at 722 373
\pinlabel \textcolor{red}{$6$} at 722 429

\pinlabel {$\alpha$} at 820 218
\pinlabel $T^2$ at 400 485
\pinlabel $\ra$ at 400 460
\pinlabel $\ra$ at 400 123
\pinlabel $\blacktriangle$ at 287 291
\pinlabel $\blacktriangle$ at 520 291
\pinlabel $S^2$ at 130 485
\pinlabel $\ra$ at 130 460
\pinlabel $\ra$ at 130 123
\pinlabel $S^2$ at 680 485
\pinlabel $\ra$ at 680 460
\pinlabel $\ra$ at 680 123
\endlabellist
\centering
\includegraphics[scale=.4]{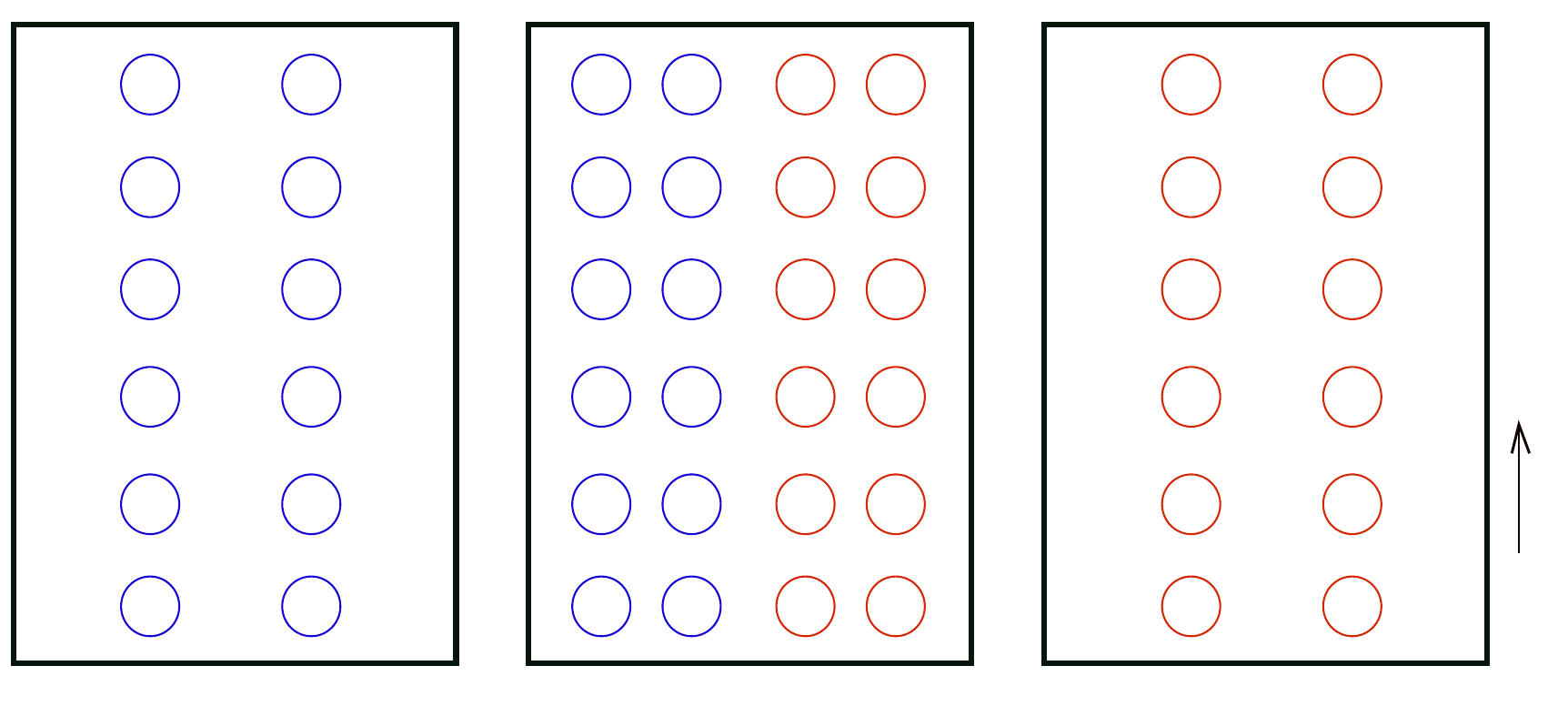}
\caption{The model of the surface $\Sigma_{23}$ where $p=3$. } 
\label{fig:model} \end{figure}

The surface $\Sigma$ is built out of two copies of the standard action of $\Z/2p\Z$ on $S^2$ and one copy of a free action of $\Z/2p\Z$ on $T^2$. We glue each copy of $S^2$ to $T^2$ along $k/p$ free orbits by an equivariant connected sum. In the figure, $\alpha$ acts by vertical translation. Note that the fixed points of $\alpha$ on $S^2$ are not pictured in the figure -- they are at $\pm\infty$ along the $x$-axis. 

To derive this model, recall that $D_{4k}=\langle a,b\mid a^{4k}=1=b^2, bab=a^{-1}\rangle$ has abelianization $D_{4k}\to(\Z/2\Z)^2$ with kernel $\langle a^2\rangle\cong\Z/2k\Z$. Then there is a sequence of regular covers 
\[\Sigma\xrightarrow{\langle a^2\rangle}\Sigma/\langle a^2\rangle\xrightarrow{(\Z/2\Z)^2}\Sigma/D_{4k}.\]
The cover $\Sigma/\langle a^2\rangle\to\Sigma/D_{4k}$ is unbranched and is the $\Z/2\Z$-homology cover of $T^2$ (in particular, $\Sigma/\langle a^2\rangle$ is also a torus). The cover $\Sigma\to\Sigma/\langle a^2\rangle$ is branched over four points;  the local monodromy around the branched points is $a^2$ at two of the branched points and $a^{-2}$ at the other two. Choosing branched cuts joining branched points in $a^{\pm2}$ pairs gives a model for $\Sigma$, and one can check that this model is equivalent to the one described above. (The spheres in Figure \ref{fig:model} arise from pre-images under $\Sigma\to\Sigma/\langle a^2\rangle$ of neighborhoods of the branch cuts.)  


By Remark \ref{rmk:lift-fp}, since $\sigma(a^{2k/p})$ has a fixed point, the subgroup $\langle\alpha\rangle\subset \Mod(\Sigma,*)$, which lifts $\langle a^{2k/p}\rangle$, has a fixed point. The different lifts of $\langle a^{2k/p}\rangle$ to a finite subgroup of $\Mod(\Sigma,*)$ are in one-to-one correspondence to fixed points of $a^{2k/p}$. Since these fixed points are permuted transitively by the action of $D_{4k}$, the different lifts of $\langle a^{2k/p}\rangle$ are conjugate. Consequently, for the purpose of our argument, we can choose $*$ to be any one of the four fixed points of $\alpha$ and prove Proposition \ref{prop:centralizer} for this choice, without loss of generality. (It will also be evident from the argument that a similar argument applies if $*$ is changed to another fixed point.) 




\begin{remark}We do not known how generally the relation $\Mod(\Sigma,*)=\langle C(\alpha^2),C(\alpha^p)\rangle$ holds. For example, it may hold for every $\Z/2p\Z$ subgroup of $\Mod(\Sigma,*)$. We do not know a general (abstract) approach to this problem.
\end{remark} 

\begin{proof}[Proof of Proposition \ref{prop:centralizer}]\ 

\p{Symmetry breaking} In preparation for constructing a sequence of subsurface stabilizations, in this paragraph we find a suitable collection of Dehn twists that belong to $\Gamma$. 
The obvious way for $\tau_c$ to belong to $\Gamma$ is if $c$ is preserved by either $\alpha^2$ or $\alpha^p$. More generally, we use a process that we call \emph{symmetry breaking} to show $\tau_c\in\Gamma$ for certain $c$. We formulate this in the following lemma, which is similar to \cite[Lem.\ 3.2]{Chen-Salter}. 

\begin{lemma}[Symmetry breaking] \label{lem:symmetry}
Assume that $c,d\subset \Sigma$ are simple closed curves that intersect once and $\tau_d\in\Gamma$. Suppose that  either \emph{(i)} $\alpha^p(c)$ is disjoint from $c$ and $d$ or \emph{(ii)} the curves $\alpha^2(c),\alpha^4(c)\ldots,\alpha^{2p-2}(c)$ are disjoint from $d$ and the curves $c,\alpha^2(c),\alpha^4(c)\ldots,\alpha^{2p-2}(c)$ are pairwise disjoint. Then $\tau_c\in\Gamma$. 
\end{lemma}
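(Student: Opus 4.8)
The plan is to show, in both cases, that $\Gamma$ contains an element of the special form $A=\tau_c\,Q$, where $Q$ is a product of Dehn twists along curves that are each disjoint from both $c$ and $d$, and then to extract $\tau_c$ from $A$ using the braid relation. In case (i) I would take $A=\tau_c\tau_{\alpha^p(c)}$, so $Q=\tau_{\alpha^p(c)}$: since $\alpha^p$ has order $2$ it interchanges $c$ and $\alpha^p(c)$, and as these two curves are disjoint the product $\tau_c\tau_{\alpha^p(c)}$ is fixed under conjugation by $\alpha^p$, whence $A\in C(\alpha^p)\subseteq\Gamma$. In case (ii) I would take $A=\prod_{j=0}^{p-1}\tau_{\alpha^{2j}(c)}$, so that $Q=\prod_{j=1}^{p-1}\tau_{\alpha^{2j}(c)}$: because $\alpha^2$ has order $p$ and cyclically permutes the pairwise-disjoint curves $c,\alpha^2(c),\dots,\alpha^{2p-2}(c)$, the twists commute and their product $A$ is fixed under conjugation by $\alpha^2$, whence $A\in C(\alpha^2)\subseteq\Gamma$. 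In either case the hypotheses guarantee that every curve appearing in $Q$ is disjoint from $d$ (and from $c$), so that $Q$ commutes with $\tau_d$.

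The engine of the argument is the braid relation. Since $i(c,d)=1$, the twists satisfy $\tau_c\tau_d\tau_c=\tau_d\tau_c\tau_d$, which rearranges to $\tau_c=\tau_d\,\tau_c\,\tau_d\,\tau_c^{-1}\,\tau_d^{-1}$. The decisive feature of this expression is that the total exponent of $\tau_c$ on the right is $0$. Consequently, substituting $A=\tau_c Q$ for $\tau_c$ (and $A^{-1}=Q^{-1}\tau_c^{-1}$ for $\tau_c^{-1}$) and using that $Q$ commutes with $\tau_d$, every occurrence of $Q$ cancels:
\[\tau_d\,A\,\tau_d\,A^{-1}\,\tau_d^{-1}=\tau_d\,\tau_c Q\,\tau_d\,Q^{-1}\tau_c^{-1}\,\tau_d^{-1}=\tau_d\,\tau_c\,\tau_d\,\tau_c^{-1}\,\tau_d^{-1}=\tau_c.\]
Since $A\in\Gamma$ and $\tau_d\in\Gamma$ by hypothesis, the left-hand side lies in $\Gamma$, and therefore $\tau_c\in\Gamma$, which is the desired conclusion. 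Note that the two cases run through the \emph{identical} algebraic manipulation; they differ only in which centralizer supplies the element $A$.

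The step I expect to require the most care is the verification that $A$ genuinely lies in the appropriate centralizer: one must confirm that the finite-order symmetry permutes the relevant curves exactly as claimed (an involutive swap of $\{c,\alpha^p(c)\}$ in case (i), a single $p$-cycle on the $\alpha^2$-orbit in case (ii)), and that pairwise disjointness makes the product of twists well-defined and invariant under conjugation by the symmetry. There is no deep obstacle once the mechanism is in hand; the conceptual heart of the lemma is simply the observation that the braid relation writes $\tau_c$ as a word in $\tau_c,\tau_d$ with zero $\tau_c$-exponent. It is precisely this that allows the ``symmetric partners'' gathered into $Q$ to cancel in pairs, thereby \emph{breaking} the $\alpha$-symmetry of $A$ and isolating the single Dehn twist $\tau_c$.
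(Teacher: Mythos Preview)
Your proof is correct and follows essentially the same approach as the paper. Both arguments construct the same centralizer element $A$ (the product of twists over the $\alpha^p$- or $\alpha^2$-orbit of $c$), use that the ``extra'' twists commute with $\tau_d$, and invoke the braid relation; the paper presents this in two steps ($A\tau_dA^{-1}=\tau_c\tau_d\tau_c^{-1}\in\Gamma$, then $\tau_d\tau_c\tau_d^{-1}\in\Gamma$ via the braid relation), while you package the same computation into the single identity $\tau_dA\tau_dA^{-1}\tau_d^{-1}=\tau_c$.
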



\begin{proof}[Proof of Lemma \ref{lem:symmetry}]
We prove case (i) of the statement; case (ii) is similar. Since Dehn twists about disjoint curves commute, 
\[\big(\tau_c\tau_{\alpha^p(c)}\big)\tau_d\big(\tau_c\tau_{\alpha^p(c)}\big)^{-1} = \tau_c\tau_d\tau_c^{-1}.\]
The left hand side of the equation is in $\Gamma$ because $\tau_d\in\Gamma$ by assumption and $\tau_c\tau_{\alpha^p(c)}\in\Gamma$ because the curves $c,\alpha^p(c)$ are permuted by $\alpha^p$ and are disjoint (so their twists commute), and thus $\tau_c\tau_{\alpha^p(c)}\in C(\alpha^p)\subset \Gamma$. 
Since $c$ and $d$ intersect once, the braid relation implies that $\tau_d\tau_c\tau_d^{-1}= \tau_c\tau_d\tau_c^{-1}$ also belongs to $\Gamma$. Since $\tau_d\in \Gamma$ this implies that $\tau_c\in \Gamma$, as desired. 
\end{proof}

\begin{remark}
When applying Lemma \ref{lem:symmetry}(i) or (ii) we refer to it as the $\alpha^p$- or $\alpha^2$-symmetry breaking, respectively. 
\end{remark}

\begin{lemma}\label{lem:symmetry-breaking}
Dehn twists about the curves in Figure \ref{fig:symmetrybreaking} are in $\Gamma$. 
\end{lemma}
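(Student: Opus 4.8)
The plan is to go through the curves of Figure \ref{fig:symmetrybreaking} one at a time and show each corresponding Dehn twist lies in $\Gamma=\langle C(\alpha^2),C(\alpha^p)\rangle$ using exactly two mechanisms. The first is the direct observation already recorded above: if a curve $c$ is preserved setwise by $\alpha^2$ (respectively $\alpha^p$), then $\tau_c$ commutes with $\alpha^2$ (respectively $\alpha^p$), so $\tau_c\in C(\alpha^2)\subset\Gamma$ (respectively $\tau_c\in C(\alpha^p)\subset\Gamma$). The second mechanism is the symmetry-breaking Lemma \ref{lem:symmetry}: once I have a curve $d$ with $\tau_d\in\Gamma$ meeting $c$ once, I can conclude $\tau_c\in\Gamma$ provided the disjointness hypothesis (i) or (ii) holds for the pair $(c,d)$.

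First I would produce a collection of \emph{seed} curves whose twists lie in $\Gamma$ by the direct method. In the model of Figure \ref{fig:model}, $\alpha$ acts by vertical translation through the $2p$ levels, so $\alpha^2$ is the shift by two levels (order $p$) and $\alpha^p$ is the shift by $p$ levels (the involution). Reading off the picture, the curves that run in the direction of the translation, together with the symmetric curves on the torus piece $T^2$ invariant under the full $\Z/2p\Z$-action, are preserved by $\alpha^2$ and/or $\alpha^p$; their twists are therefore seeds, belonging to $\Gamma$ immediately.

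Next I would order the remaining curves of the figure so that each one is reachable from an already-established curve by a single application of Lemma \ref{lem:symmetry}. For each such $c$, I would name an explicit $d$ with $\tau_d$ already known to lie in $\Gamma$ and intersecting $c$ exactly once, and then verify the appropriate disjointness condition directly from the positions of the curves in the model: either that $\alpha^p(c)$ is disjoint from both $c$ and $d$ (the $\alpha^p$-symmetry breaking), or that the orbit $c,\alpha^2(c),\dots,\alpha^{2p-2}(c)$ is pairwise disjoint with all of its nontrivial translates disjoint from $d$ (the $\alpha^2$-symmetry breaking). Each verification is a finite intersection check made transparent by the explicit vertical-translation model.

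The main obstacle is the combinatorial bookkeeping of these intersection patterns. The curves of Figure \ref{fig:symmetrybreaking} and their processing order must be chosen so that the hypotheses of Lemma \ref{lem:symmetry} genuinely hold simultaneously, namely that $d$ meets $c$ once while the relevant $\alpha$-translates of $c$ stay clear of $d$ (and, in case (ii), of each other). This is delicate precisely because $\alpha$ has small order $2p$, so the translates of a given curve sit close together and can easily collide; the purpose of the explicit model is to let these disjointness relations be read off from the figure, which is what makes the curve-by-curve argument go through.
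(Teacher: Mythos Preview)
Your plan is exactly the paper's approach: identify seed curves invariant under $\alpha^p$ (or $\alpha^2$), then propagate membership in $\Gamma$ via Lemma \ref{lem:symmetry}. What is missing is only the execution---you describe the strategy but never name which curves are seeds and which symmetry-breaking step handles each remaining curve, and since the lemma is precisely the assertion that \emph{these particular} curves work, the proof has to carry out that short case check. The paper does it in five lines: $c_1$ and $c_6$ are $\alpha^p$-invariant (seeds); $c_2$ by $\alpha^2$-symmetry breaking with $d=c_1$; $c_3$ and $c_4$ by $\alpha^p$-symmetry breaking with $d=c_2$; $c_5$ and $c_7$ by $\alpha^p$-symmetry breaking with $d=c_3$.
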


\vspace{.1in}
\begin{figure}[ht!]
\labellist
\hair 2pt
\pinlabel \textcolor{blue}{$1$} at 87 154
\pinlabel \textcolor{blue}{$2$} at 87 208
\pinlabel \textcolor{blue}{$3$} at 87 262
\pinlabel \textcolor{blue}{$4$} at 87 320
\pinlabel \textcolor{blue}{$5$} at 87 373
\pinlabel \textcolor{blue}{$6$} at 87 429

\pinlabel \textcolor{blue}{$7$} at 173 154
\pinlabel \textcolor{blue}{$8$} at 173 208
\pinlabel \textcolor{blue}{$9$} at 173 262
\pinlabel \textcolor{blue}{$10$} at 173 320
\pinlabel \textcolor{blue}{$11$} at 173 373
\pinlabel \textcolor{blue}{$12$} at 173 429

\pinlabel \textcolor{blue}{$1$} at 326 154
\pinlabel \textcolor{blue}{$2$} at 326 208
\pinlabel \textcolor{blue}{$3$} at 326 262
\pinlabel \textcolor{blue}{$4$} at 326 320
\pinlabel \textcolor{blue}{$5$} at 326 373
\pinlabel \textcolor{blue}{$6$} at 326 429

\pinlabel \textcolor{blue}{$7$} at 373 154
\pinlabel \textcolor{blue}{$8$} at 373 208
\pinlabel \textcolor{blue}{$9$} at 373 262
\pinlabel \textcolor{blue}{$10$} at 373 320
\pinlabel \textcolor{blue}{$11$} at 373 373
\pinlabel \textcolor{blue}{$12$} at 373 429


\pinlabel \textcolor{red}{$7$} at 433 154
\pinlabel \textcolor{red}{$8$} at 433 208
\pinlabel \textcolor{red}{$9$} at 433 262
\pinlabel \textcolor{red}{$10$} at 433 320
\pinlabel \textcolor{red}{$11$} at 433 373
\pinlabel \textcolor{red}{$12$} at 433 429

\pinlabel \textcolor{red}{$1$} at 480 154
\pinlabel \textcolor{red}{$2$} at 480 208
\pinlabel \textcolor{red}{$3$} at 480 262
\pinlabel \textcolor{red}{$4$} at 480 320
\pinlabel \textcolor{red}{$5$} at 480 373
\pinlabel \textcolor{red}{$6$} at 480 429

\pinlabel \textcolor{red}{$7$} at 637 154
\pinlabel \textcolor{red}{$8$} at 637 208
\pinlabel \textcolor{red}{$9$} at 637 262
\pinlabel \textcolor{red}{$10$} at 637 320
\pinlabel \textcolor{red}{$11$} at 637 373
\pinlabel \textcolor{red}{$12$} at 637 429

\pinlabel \textcolor{red}{$1$} at 722 154
\pinlabel \textcolor{red}{$2$} at 722 208
\pinlabel \textcolor{red}{$3$} at 722 262
\pinlabel \textcolor{red}{$4$} at 722 320
\pinlabel \textcolor{red}{$5$} at 722 373
\pinlabel \textcolor{red}{$6$} at 722 429

\pinlabel {$\alpha$} at 820 218

\pinlabel $\ra$ at 400 459
\pinlabel $\ra$ at 400 123
\pinlabel $\blacktriangle$ at 287 291
\pinlabel $\blacktriangle$ at 520 291

\pinlabel $\ra$ at 130 459
\pinlabel $\ra$ at 130 123

\pinlabel $\ra$ at 680 459
\pinlabel $\ra$ at 680 123

\pinlabel \textcolor{magenta}{$c_1$} at 410 308
\pinlabel \textcolor{magenta}{$c_2$} at 354 133
\pinlabel \textcolor{magenta}{$c_3$} at 385 186
\pinlabel \textcolor{magenta}{$c_4$} at 130 140
\pinlabel \textcolor{magenta}{$c_5$} at 507 189
\pinlabel \textcolor{magenta}{$c_6$} at 409 405
\pinlabel \textcolor{magenta}{$c_7$} at 130 210

\endlabellist
\centering
\includegraphics[scale=.45]{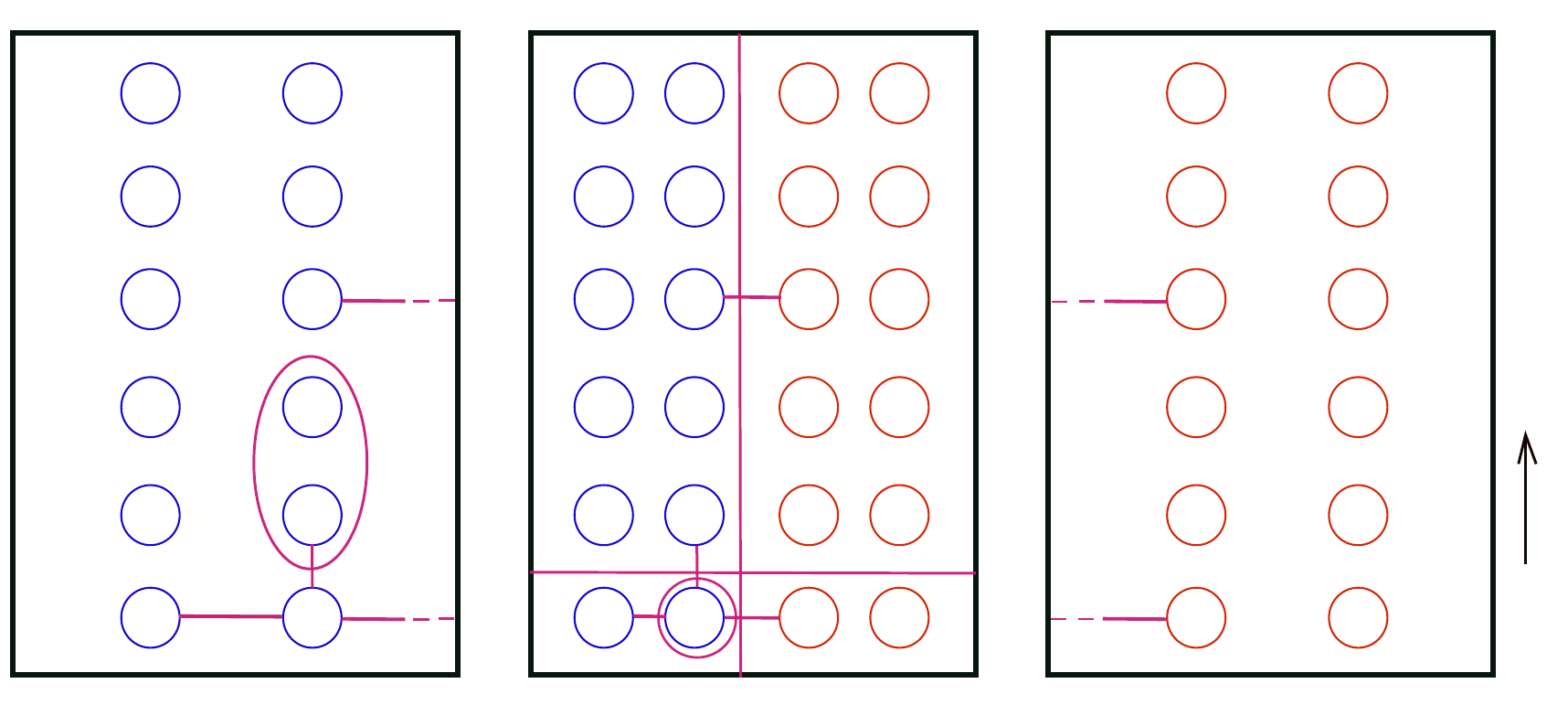}
\caption{The curves used in the proof of Lemma \ref{lem:symmetry-breaking}. Here we are using the model for $\Sigma_{23}$, but the proof follows in the same way for similar types of curves on any $\Sigma$.} 
\label{fig:symmetrybreaking} \end{figure}

In Figure \ref{fig:symmetrybreaking}, we illustrate the case $k=6,p=3$. The corresponding curves in the general case belong to $\Gamma$ by the exact same argument.

\begin{proof}[Proof of Lemma \ref{lem:symmetry-breaking}]
First observe that $\tau_{c_1}$ and $\tau_{c_6}$ are in $\Gamma$ because each is invariant under $\alpha^p$. 
We deduce that $\tau_{c_2}\in\Gamma$ using $\alpha^2$-symmetry breaking with $d=c_1$. 
Each of $\tau_{c_3}$ and $\tau_{c_4}$ are in $\Gamma$ by $\alpha^p$-symmetry breaking with $d=c_2$. Finally, both $\tau_{c_5}$ and $\tau_{c_7}$ are in $\Gamma$ by $\alpha^p$-symmetry breaking with $d=c_3$. 
\end{proof}

\p{Surface stabilization sequence} We stabilize with the sequence of curves represented in Figure \ref{fig:stab-sequence}.  To get the initial subsurface $S_0$ we can take the subsurface spanned by the chain of curves $c_0,c_1,\ldots,c_{4}$. This subsurface has genus 2 and one boundary component, and these curves are Humphries generators for $S_0$ \cite[Fig.\ 4.10]{Farb-Margalit}. 
Next we extend this chain with the curves $c_5,\ldots,c_{4k-1}$; at this point the left genus-0 subsurface has been filled. Next we stabilize with $c_{4k}$ and the curves ($c_{4k+1}, c_{4k+2}, \cdots, c_{8k-2}$) that fill the right genus-0 subsurface; there is some choice in the order of curves we stabilize, but this is not important. Finally we stabilize with the curves $c_{8k-1}$ and $c_{8k}$ that generate $\pi_1(T^2)$. 

All of the twists about the curves used are in $\Gamma$. In each case, this can be seen either directly from the statement of Lemma \ref{lem:symmetry-breaking} or by an argument that is a small variation of its proof. Since this collection of curves fills $\Sigma$, we have shown that $\Gamma=\Mod(\Sigma,*)$. This proves Proposition \ref{prop:centralizer}. 
\end{proof}

\vspace{.1in}
\begin{figure}[ht!]
\labellist
\hair 2pt
\pinlabel {$\alpha$} at 820 218

\pinlabel $\ra$ at 400 460
\pinlabel $\ra$ at 400 123
\pinlabel $\blacktriangle$ at 287 291
\pinlabel $\blacktriangle$ at 520 291

\pinlabel $\ra$ at 130 460
\pinlabel $\ra$ at 130 123

\pinlabel $\ra$ at 680 460
\pinlabel $\ra$ at 680 123


\hair 2pt

\pinlabel \textcolor{magenta}{$c_0$} at 45 430
\pinlabel \textcolor{blue}{$c_1$} at 88 430
\pinlabel \textcolor{magenta}{$c_2$} at 100 403
\pinlabel \textcolor{blue}{$c_3$} at 88 375
\pinlabel \textcolor{magenta}{$c_4$} at 100 340
\pinlabel \textcolor{blue}{$c_5$} at 88 320
\pinlabel \textcolor{magenta}{$c_6$} at 100 293
\pinlabel \textcolor{blue}{$c_7$} at 88 265
\pinlabel \textcolor{magenta}{$c_8$} at 100 235
\pinlabel \textcolor{blue}{$c_9$} at 88 205
\pinlabel \textcolor{magenta}{$c_{10}$} at 102 180
\pinlabel \textcolor{blue}{$c_{11}$} at 88 155
\pinlabel \textcolor{magenta}{$c_{12}$} at 130 141
\pinlabel \textcolor{blue}{$c_{13}$} at 173 155
\pinlabel \textcolor{magenta}{$c_{14}$} at 188 180
\pinlabel \textcolor{blue}{$c_{15}$} at 173 205
\pinlabel \textcolor{magenta}{$c_{16}$} at 188 235
\pinlabel \textcolor{blue}{$c_{17}$} at 173 265
\pinlabel \textcolor{magenta}{$c_{18}$} at 188 293
\pinlabel \textcolor{blue}{$c_{19}$} at 173 320
\pinlabel \textcolor{magenta}{$c_{20}$} at 188 347
\pinlabel \textcolor{blue}{$c_{21}$} at 173 375
\pinlabel \textcolor{magenta}{$c_{22}$} at 188 400
\pinlabel \textcolor{blue}{$c_{23}$} at 173 430

\pinlabel \textcolor{magenta}{$c_{24}$} at 230 310

\pinlabel \textcolor{red}{$c_{25}$} at 637 320
\pinlabel \textcolor{magenta}{$c_{26}$} at 622 349
\pinlabel \textcolor{red}{$c_{27}$} at 637 375
\pinlabel \textcolor{magenta}{$c_{28}$} at 622 402
\pinlabel \textcolor{red}{$c_{29}$} at 637 430

\pinlabel \textcolor{magenta}{$c_{30}$} at 678 439

\pinlabel \textcolor{red}{$c_{31}$} at 723 430
\pinlabel \textcolor{magenta}{$c_{32}$} at 740 400
\pinlabel \textcolor{red}{$c_{33}$} at 723 375
\pinlabel \textcolor{magenta}{$c_{34}$} at 740 347
\pinlabel \textcolor{red}{$c_{35}$} at 723 320

\pinlabel \textcolor{red}{$c_{36}$} at 637 155
\pinlabel \textcolor{magenta}{$c_{37}$} at 622 180
\pinlabel \textcolor{red}{$c_{38}$} at 637 208
\pinlabel \textcolor{magenta}{$c_{39}$} at 622 237
\pinlabel \textcolor{red}{$c_{40}$} at 637 265

\pinlabel \textcolor{magenta}{$c_{41}$} at 678 141

\pinlabel \textcolor{red}{$c_{42}$} at 723 155
\pinlabel \textcolor{magenta}{$c_{43}$} at 740 180
\pinlabel \textcolor{red}{$c_{44}$} at 723 208
\pinlabel \textcolor{magenta}{$c_{45}$} at 740 237
\pinlabel \textcolor{red}{$c_{46}$} at 723 265

\pinlabel \textcolor{magenta}{$c_{47}$} at 302 190
\pinlabel \textcolor{magenta}{$c_{48}$} at 418 400

\endlabellist
\centering
\includegraphics[scale=.45]{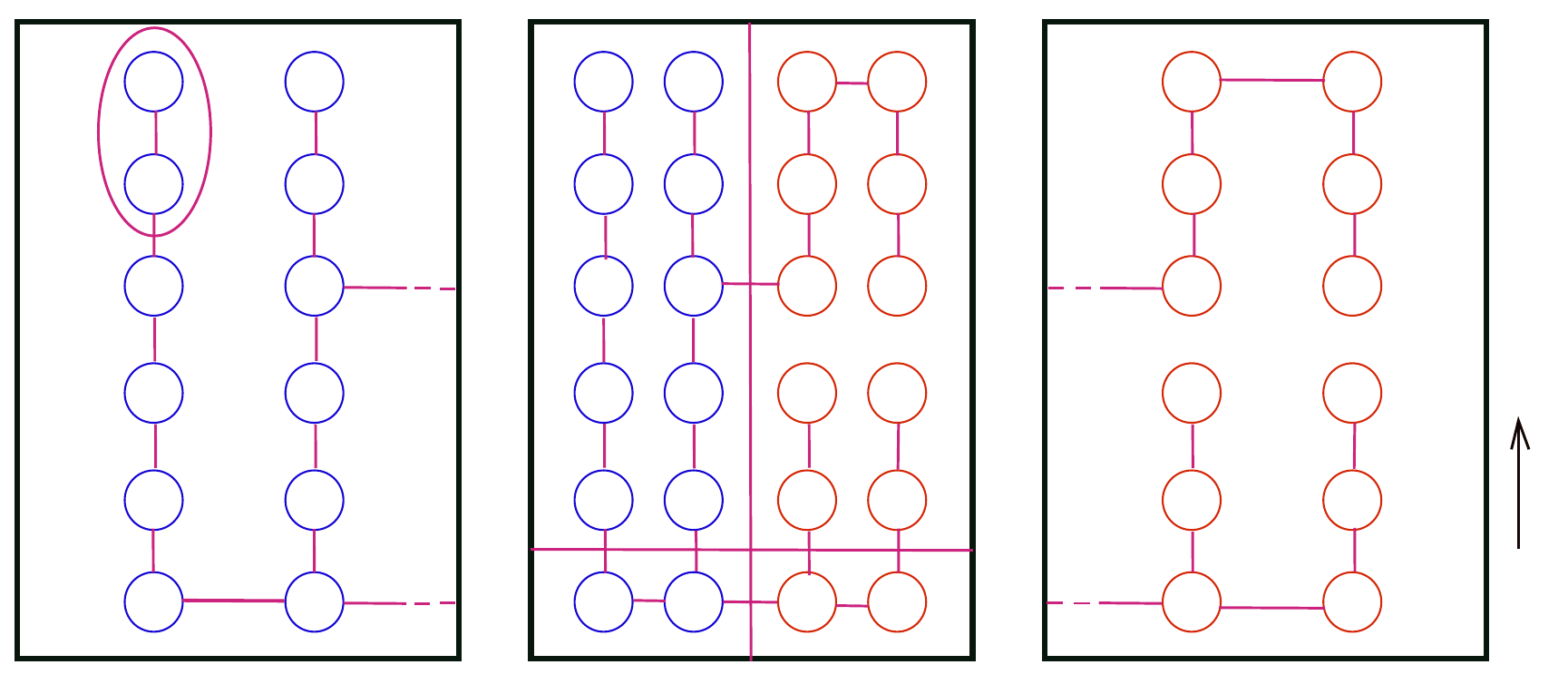}
\caption{The stabilization sequence we use for the case $k=6$ and $p=3$.} 
\label{fig:stab-sequence} \end{figure}

\subsection{Step 4: conclusion} 
By Proposition \ref{prop:fixed-set}, $\what\sigma(\alpha)$, $\what\sigma(\alpha^2)$ and $\what\sigma(\alpha^p)$ all have the same fixed set, which is a circle $c\subset \what X$. The centralizers $C(\alpha^2)$ and $C(\alpha^p)$ preserve $c$. By Proposition \ref{prop:centralizer}, $\Mod(\Sigma,*)=\langle C(\alpha^2),C(\alpha^p)\rangle$, so $\what\sigma\big(\Mod(\Sigma,*)\big)$ preserves $c$. This contradicts the fact that $\widehat\sigma(\pi_1(\Sigma_g))$ acts as the deck group, which as a properly discontinuous action  does not preserve any compact set.

\vspace{1in}
\bibliographystyle{alpha}
\bibliography{citing}

\end{document}